\newcommand{\F}{\mathbb{F}}
\newcommand{\Z}{\mathbb{Z}}
\newcommand{\Q}{\mathbb{Q}}
\newcommand{\N}{\mathbb{N}}
\newcommand{\PP}{\mathbb{P}}
\newcommand{\ZZ}{\mathbb{Z}}
\newcommand{\Zpk}{\mathcal{R}}
\newcommand{\PZpk}{\mathbb{P}\mathcal{R}^{n-1}}
\DeclareMathOperator{\len}{length}
\declaretheorem{theorem}
\declaretheorem[sibling=theorem]{corollary, lemma, proposition, question, definition, conjecture, example, remark}
\title{Covering points with planes}
\author{Hailong Dao \thanks{Department of Mathematics, University of Kansas, Lawrence, KS 66045, USA} \and Manik Dhar \thanks{Department of Mathematics, Massachusetts Institute of Technology, MA, USA} \and Izabella Łaba \thanks{Department of Mathematics, University of British Columbia, Vancouver,
B.C. V6T 1Z2, Canada}
\and Ben Lund \thanks{
Discrete Mathematics Group (DIMAG), Institute for Basic Science (IBS), Daejeon, South Korea.}
}
\date{January 2025}
\begin{document}

\maketitle

\begin{abstract}
    Suppose that each proper subset of a set $S$ of points in a vector space is contained in the union of planes of specified dimensions, but $S$ itself is not contained in any such union.
    How large can $|S|$ be?

    We prove a general upper bound on $|S|$, which is tight in some cases, for example when all of the planes have the same dimension.
    We produce an example showing that this upper bound does not hold for point sets whose proper subsets are covered by lines in $(\mathbb{Z}/p^k\mathbb{Z})^2$ with $k\geq 2$, and prove an upper bound in this case.
    We also investigate the analogous problem for general matroids.
    
\end{abstract}

\section{Introduction}

Let $S \subset \F^n$ be a finite set of points in an $n$-dimensional vector space over a field $\F$. Our project stems from the following natural question:
\begin{question}
    Let $(D)$ be some fixed degenerate condition (for instance, $(D)$ can be ``lying on an union of $t$ hyperplanes"). If each proper subset of $S$ satisfies $(D)$, must $S$ also be $(D)$? 
\end{question}
 Clearly, such statement would be helpful in  various contexts, especially if one needs to apply some sort of induction. Equally clearly, in order for the statement to hold, $S$ must be large enough: each proper subset of $3$ points lies on a line, but the $3$ points might not!  

One of our main results settles the above question for a large collection of degenerate conditions. Let $V$ be a finite set of dimension vectors, say $V=\{(2,1),(1,1,1)\}$. Let $(D_V)$ be the condition that $S$ lies on a plane arrangement with dimension vectors from $V$ (so, in this example, $(D_V)$ means ``the points lie on either an union of a plane and a line, or three lines"). We prove:

\begin{theorem}\label{th:generalVBound}
  For any $V$, there is a constant $C(V)$ such that if any subset of $S$ of size at most $C(V)$ satisfies $(D_V)$, then $S$ is $(D_V)$.   
\end{theorem}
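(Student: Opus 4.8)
The plan is to recast the statement as a Helly-type theorem for a family of closed algebraic sets in a fixed parameter variety, and to bound the relevant Helly number uniformly in terms of $V$. Call a finite set $T$ \emph{critical} if $T$ fails $(D_V)$ but every proper subset of $T$ satisfies $(D_V)$. Since $(D_V)$ is preserved under passing to subsets, it suffices to prove that every critical set has size at most some $C(V)$: indeed, if every subset of $S$ of size at most $C(V)$ satisfies $(D_V)$ while $S$ does not, then $S$ contains a critical subset $T$ with $|T| \le C(V)$, and $T$ would be a subset of size $\le C(V)$ failing $(D_V)$, a contradiction.

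First I would reduce to bounded ambient dimension. For each $v = (d_1, \dots, d_m) \in V$, any arrangement with dimension vector $v$ lies in an affine subspace of dimension at most $\sum_i d_i + m - 1$; let $N = N(V)$ be the maximum of these quantities over $v \in V$. If the affine span of $S$ has dimension larger than $N$, then $S$ contains $N+2$ affinely independent points, and this subset already fails every $(D_v)$, since its affine span is too large to fit inside any $v$-arrangement; it is therefore a witness of size $N+2$. Hence, provided $C(V) \ge N+2$, the hypothesis forces $\dim(\mathrm{aff}\,S) \le N$, and we may work inside $\F^N$. Next, set up the parameter space: for a single $v$, let $G_v$ be the quasi-projective variety of $v$-arrangements in $\F^N$, a product of affine Grassmannians of dimension and degree bounded in terms of $v$ and $N$, and for each point $p$ let $Z_p^{(v)} = \{\, \mathcal A \in G_v : p \in \mathcal A \,\}$, a Zariski-closed set of bounded degree (the union over the $m$ planes of the incidence conditions $p \in P_i$). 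To absorb the disjunction over $V$, take $G = \bigsqcup_{v \in V} G_v$ and $Z_p = \bigsqcup_{v \in V} Z_p^{(v)}$; then $T$ satisfies $(D_V)$ exactly when $\bigcap_{p \in T} Z_p \ne \emptyset$, with $G$ and $\{Z_p\}$ of complexity bounded purely in terms of $V$. A set $T$ is critical precisely when $\{Z_p\}_{p \in T}$ is an inclusion-minimal subfamily with empty intersection, so bounding critical sets is exactly bounding the Helly number of this family of algebraic sets.

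The heart of the argument, and the step I expect to be the main obstacle, is a \emph{uniform} Helly bound: the Helly number of a family of closed subsets of a fixed variety $G \subseteq \mathbb{P}^L$, each cut out by forms of degree at most $D$, should be bounded by a constant $h(L,D)$. Quasi-compactness of $G$ in the Zariski topology gives a \emph{finite} subfamily witnessing emptiness, but not a uniformly bounded one; the quantitative bound must come from a Bezout/dimension-degree argument. Concretely, one builds the intersection $G \supseteq Z_{p_1} \supseteq Z_{p_1}\cap Z_{p_2} \supseteq \cdots$, and at each strict inclusion some discrete invariant (the dimension together with the total degree of the top-dimensional part, ordered lexicographically) strictly decreases; Bezout bounds this invariant initially in terms of $\deg G$, $D$, and $L$, so only boundedly many points can strictly shrink the intersection. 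Controlling the growth in the number of irreducible components under repeated hypersurface sections is the delicate point, and is exactly where a careful effective-Noetherianity estimate is needed.

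Finally, I would address rationality, since $(D_V)$ asks for arrangements of $\F$-planes while the Helly bound above is cleanest over $\overline{\F}$. The bridge is that any plane in a covering arrangement containing more than $d_i$ affinely independent points of $S$ is automatically defined over $\F$ (it is their affine span); the points lying only on ``deficient'' planes number at most a bounded quantity and can be treated separately, so geometric coverability of an $\F$-point set is equivalent to $\F$-coverability after enlarging $C(V)$ by a bounded amount. Combining the three ingredients, we may take $C(V) = \max\{\,N(V)+2,\ h(L,D)\,\}$, with $L$ and $D$ the complexity bounds for $G$ and the $Z_p$, all functions of $V$ alone.
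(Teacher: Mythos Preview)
Your overall plan---recast $(D_V)$ as nonemptiness of an intersection $\bigcap_{p\in T}Z_p$ in a parameter variety of arrangements and then bound the Helly number of the family $\{Z_p\}$---is a valid route to the qualitative statement, but it is quite different from what the paper does, and your key step has a gap as written. The lexicographic invariant ``(dimension, degree of the top-dimensional part)'' does \emph{not} strictly decrease at every strict inclusion $X_i\supsetneq X_{i+1}=X_i\cap Z_{p_{i+1}}$: it can happen that every top-dimensional component of $X_i$ is contained in $Z_{p_{i+1}}$ while some lower-dimensional component is not, so the top part is unchanged and your invariant stalls. The fix is not Bezout but linear algebra: each $Z_p$ is cut out by polynomials of degree at most some $D=D(V)$ in a fixed ambient $\mathbb{A}^L$, so the ideals $J_i=\sum_{j\le i}I(Z_{p_j})$ are generated in degree $\le D$; since $X_i\supsetneq X_{i+1}$ forces $(J_i)_{\le D}\subsetneq(J_{i+1})_{\le D}$, the chain has length at most $\dim R_{\le D}=\binom{L+D}{D}$. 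That rescues the argument, but notice it is exactly the chain-of-subspaces trick the paper uses directly.

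The paper's proof skips the parameter space entirely. After a generic projection to $\F^{k+1}$ (where $k$ is the largest entry in $V$), each covering arrangement $A_i$ of $S\setminus\{P_i\}$ gives rise to the product $\Gamma_i$ of the linear ideals of its planes; the span $Y_i$ of the degree-$\le t$ generators of $\Gamma_i$ lies in $R_{\le t}$ and vanishes on $S\setminus\{P_i\}$. If $|S|>\dim R_{\le t}=\binom{t+k+1}{k+1}$, the chain $Y_1\subseteq Y_1+Y_2\subseteq\cdots$ must repeat, so some $Y_{i+1}\subseteq Y_1+\cdots+Y_i$; then $Y_{i+1}$ vanishes on $\bigcap_{j\le i}(S\setminus\{P_j\})\ni P_{i+1}$, hence on all of $S$, so $A_{i+1}$ already covers $S$. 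This avoids Grassmannians, component counting, and your rationality patch, and yields the explicit bound $\binom{t+k+1}{k+1}$, which is sharp when $V=\{(k,\dots,k)\}$. Your approach, even after the fix above, would produce a bound depending on the dimension $L$ of the parameter variety, which is much larger.
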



At this point, a reader might reasonably ask why we would want to study the above problem with a set of multiple distinct dimension vectors. To provide motivation we list below a number of situations where understanding such conditions is desirable:

\begin{itemize}
    \item Algebraic geometry: Let $X$ be a finite set of points in the complex projective space $\PP^n$. A well-known result (\cite[Proposition 1.2 and 1.5]{Quadrics} or \cite[Theorem 8.18]{eisenbud2006geometry}) says that the Betti number $\beta_{n,n+1}$ of the coordinate ring of $X$ is non-zero if and only if $X$ lies on the union of two planes whose sum of dimension is less than $n$ (in other words, $X$ satisfies condition $D_V$ for $V=\{(a,b), a+b<n\}$). For similar statements and some fascinating open questions, see \cite{AST_1993__218__187_0, eisenbud1992finite, eisenbud1989remarks, green1988some}.
    \item Matroid theory: A matroid $M$ is $2$-connected \cite[Ch. 4]{oxley2011matroid} if and only if the elements of $M$ are not contained in the union of any two flats whose sum of ranks is at most the rank of $M$ (this generalizes the condition of the previous item).
    Murty \cite{murty1974extremal} showed that a rank $r$ matroid that is minimally $2$-connected has at most $2(r-1)$ elements, and Oxley \cite{oxley1981connectivity} gave a complete characterization of mimimally $2$-connected matroids with $2(r-1)$ elements.
    These results have many applications, and several related questions remain open.
    For example, see \cref{sec:combinatorial} for a discussion of $t$-thick matroids. 
    \item Combinatorial geometry: a set of points is $k$-degenerate if it is contained in a set $L_1,\ldots,L_t$ of planes, each with dimension at least $1$, whose dimensions add up to at most $k$.
    Point sets that span few $k$-planes \cite{lund2018essential,do2020extending} or whose average $k$-planes have many points \cite{campbell2023average,campbell2024characterizing} are almost $k$-degenerate.
    
\item Intrinsic interest: We found the problem itself to be interesting even for very simple set of dimension vectors, such as those with only $1$s and $0$s. See \cref{th:withZeros}. It seems any definitive result would require a combination of techniques, and we hope this work will lead to further investigations in this direction. 
    
\end{itemize}

For a specific condition $(D_V)$, it is of considerable interest to find the best possible bound for $C(V)$. This is in general a difficult and quite fascinating problem. One important case in which we obtain a complete solution is when $V=(n-1,...,n-1)$ with $t$ entries; in other words, $(D_V)$ means  ``lying on a union of $t$ hyperplanes":
\bigskip

\begin{theorem}\label{th:introHyperplaneCover}
    If $S \subseteq \F^n$ and every subset $T \subseteq S$ with $|T| \leq \binom{n+t}{n}$ is contained in the union of a set of $t$ affine hyperplanes, then $S$ is contained in the union of a set of $t$ affine hyperplanes.
\end{theorem}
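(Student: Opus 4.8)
The plan is to translate ``covered by $t$ hyperplanes'' into the vanishing of a single polynomial and then propagate that vanishing from a cleverly chosen spanning subset to all of $S$ using only linear algebra. Recall that a point $p$ lies in an affine hyperplane $\{\ell = 0\}$ exactly when the affine form $\ell$ vanishes at $p$, so a set is covered by hyperplanes $\{\ell_1 = 0\}, \dots, \{\ell_t = 0\}$ precisely when the product $F = \ell_1 \cdots \ell_t$ vanishes on it. Such an $F$ is a polynomial of degree at most $t$ in $n$ variables, and the space of all such polynomials has dimension exactly $N := \binom{n+t}{n}$ — this is the source of the threshold in the statement. Concretely, list the $N$ monomials $x^\alpha$ with $|\alpha| \le t$ and associate to each $p \in \F^n$ its evaluation vector $v(p) = (p^\alpha)_{|\alpha|\le t} \in \F^N$; then for $F = \sum_\alpha c_\alpha x^\alpha$ we have $F(p) = \langle c, v(p)\rangle$, so ``$F$ vanishes at $p$'' is the linear condition $c \perp v(p)$.

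With this dictionary, the steps are as follows. First I would set $U := \operatorname{span}\{v(p) : p \in S\} \subseteq \F^N$ and choose a subset $B \subseteq S$ whose evaluation vectors form a basis of $U$; since $\dim U \le N$ this yields $|B| = \dim U \le \binom{n+t}{n}$. Second, apply the hypothesis to $B$: because $|B| \le \binom{n+t}{n}$, the set $B$ is covered by $t$ affine hyperplanes, so there is a product $F = \ell_1\cdots\ell_t$ of honest (nonzero) affine forms, with coefficient vector $c \in \F^N$, that vanishes on $B$. Third, observe that $F$ vanishing on $B$ says $c \perp v(q)$ for all $q \in B$, hence $c \perp U$ because the $v(q)$ span $U$; as every $v(p)$ with $p \in S$ lies in $U$, this forces $F(p) = \langle c, v(p)\rangle = 0$ for all $p \in S$. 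Therefore $S \subseteq \{F = 0\} = \bigcup_{i=1}^t \{\ell_i = 0\}$, a union of $t$ affine hyperplanes, which is exactly the desired conclusion.

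The step doing the real work is the choice of $B$ as a subset whose evaluation vectors span $U$, rather than any attempt to construct the covering hyperplanes directly. The apparent obstacle in any argument of this kind is that the covering condition is genuinely nonlinear: the set of degree-$\le t$ polynomials that factor into affine forms is not a linear subspace of $\F^N$, so one cannot simply produce ``some'' vanishing degree-$\le t$ polynomial and hope it is reducible. The trick sidesteps this entirely, since the product structure is supplied for free by the hypothesis applied to $B$, and the only thing propagated from $B$ to the rest of $S$ is the \emph{vanishing} of $F$ — a linear condition, hence preserved under linear combinations of evaluation vectors. I expect no further obstacles: the argument uses only finite-dimensional linear algebra, requires no assumption on $\F$ (no algebraic closure or characteristic hypothesis), and works verbatim whether or not $S$ is finite, since $U$ is finite-dimensional regardless. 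It remains only to note that the $\ell_i$ produced are genuine affine forms, so $\{F=0\}$ is an honest union of at most $t$ hyperplanes.
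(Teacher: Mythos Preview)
Your argument is correct. It is, however, organized differently from the paper's proof, and the difference is worth noting.

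The paper works on the \emph{polynomial} side: it assumes $S=\{P_1,\dots,P_r\}$ is nearly covered, attaches to each $S\setminus\{P_i\}$ the subspace $Y_i\subseteq R_{\le t}$ spanned by generators of the product of the hyperplane ideals, and then runs a chain argument (Proposition~\ref{prop1}) on $Y_1\subseteq Y_1+Y_2\subseteq\cdots$ inside $R_{\le t}$ to force some $Y_{i+1}\subseteq Y_1+\cdots+Y_i$, hence $Y_{i+1}\subseteq I(S)$. You instead work on the dual \emph{evaluation} side: you look at $U=\operatorname{span}\{v(p):p\in S\}$ in $\F^N$, extract a basis $B\subseteq S$ with $|B|\le N$, invoke the hypothesis once on $B$ to produce a single product $F=\ell_1\cdots\ell_t$, and then propagate $F\equiv 0$ from $B$ to $S$ by linearity. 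The two arguments are essentially linear-algebra duals of one another, pivoting on the same dimension count $\dim R_{\le t}=\binom{n+t}{n}$.

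What each buys: your route is shorter for this particular statement, needs only a single application of the hypothesis rather than one per point, and handles infinite $S$ with no extra words. The paper's Proposition~\ref{prop1}, on the other hand, is phrased so that it applies uniformly to arbitrary sets $V$ of dimension vectors (Theorem~\ref{th:fieldUpperBound}) and to the ring-theoretic generalization in Section~\ref{sec:generalAlgebraic}; your single-polynomial trick relies on the fact that a union of $t$ hyperplanes is cut out by one degree-$t$ polynomial, which is exactly what fails for lower-dimensional flats.
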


The following example shows that the function $\binom{n+t}{n}$ in \cref{th:introHyperplaneCover} cannot be replaced by anything smaller.

\begin{example}\label{ex:triangle}
Let $T_t$ be the set of integer points in the convex hull of $(0,0), (0,t), (t,0)$. Then $T_t$ is not in the union of any set of $T_t$ lines, but each proper subset of $T_t$ is contained in the union of some set of $t$ lines.

We first show that $T_t$ is not contained in the union of any set of $t$ lines.
Suppose, toward a contradiction, that there is a least integer $t_0$ such that $T=T_{t_0}$ is covered by $t_0$ lines, and let $\mathcal{L}$ be a set of $t_0$ lines that covers $T$.
Let $L$ be the line defined by $x=0$.
Since $|L \cap T| = t+1$, we must have $L \in \mathcal{L}$.
Hence, $T \setminus \{L\}$ is covered by $t_0-1$ lines, which contradicts the minimality of $t_0$.

Now we show that, for each $P = (p_1,p_2) \in T_t$, the set $T_t \setminus \{P\}$ is contained in the union of some set of $t$ lines.
Indeed, choose the lines defined by the equations
\[\{x=c:c \in \mathbb{Z} \cap [0,p_1)\} \cup \{y=c:c \in \mathbb{Z} \cap [0,p_2)\} \cup \{x+y=c: c \in \mathbb{Z} \cap (p_2+p_1,t]\}. \]
The total number of lines chosen is $p_1 + p_2 + (t-p_1-p_2) = t$.
For any point $(a,b) \in T_t \setminus \{P\}$, either $a < p_1$, or $b < p_2$, or $a + b > p_1 + p_2$, and hence $(a,b)$ is contained in the union of the lines.

In higher dimensions, let $T_{n,t} \subset \mathbb{R}^n$ be the set of integer points satisfying $x_i \geq 0$ for each $i \in [n]$, and $x_1+x_2+ \ldots + x_n \leq t$.
An argument analogous to that given above shows that $T_{n,t}$ is not contained in the union of $t$ hyperplanes, and each proper subset of $T_{n,t}$ is.
\end{example}

    We also study this problem over more general rings. Recently, there have been some works on combinatorial geometry over general rings, and this setting makes classical questions quite interesting and subtle (see \cite{Dhar2024,laba2024}).  In \Cref{sec:generalAlgebraic} we are able to extend \Cref{th:generalVBound} to Artinian rings and the rest of the paper focuses on the ring $\Z/p^k\Z$. The earlier explicit bounds over fields do not apply because two lines at a small `angle' can intersect on a large number of points. As $k$ increases, the number of `scales' in the ring increase. In this setting we often want to study problems with increasing $k$ as it corresponds to thin tubes and balls over the $p$-adics. We  restrict our focus to the case of $n=2$.

We are able to show the following upper bound.

\begin{theorem}\label{th:introP-adic}
    If $S \subseteq (\Z/p^k\Z)^2$ and every subset $T \subseteq S$ with $$|T| \leq t(1+k^{-1}t)^k$$ is contained in the union of $t$ lines, then $S$ is contained in the union of $t$ lines. 
    When $k>t$ and $t<p$, we have the same result for $|T|\le t2^t$.
\end{theorem}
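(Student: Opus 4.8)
The plan is to prove that a set $S \subseteq (\Z/p^k\Z)^2$ which is not coverable by $t$ lines must contain a small "witness" subset that is also not coverable by $t$ lines, with the witness size bounded by $t(1+k^{-1}t)^k$. I would set this up as a contrapositive: assuming $S$ is not covered by any $t$ lines, I will extract a subset $T$ of the claimed size that is itself not covered by $t$ lines. The central difficulty, flagged in the text, is that over $\Z/p^k\Z$ two distinct lines can intersect in many points (not just one, as over a field), so the usual field-based counting argument—where a new point can lie on at most one previously-chosen line—breaks down badly. The key structural idea I would use is to organize lines by \emph{scale}: a line in $(\Z/p^k\Z)^2$ can be written with a direction whose valuation (the largest power of $p$ dividing the relevant Plücker/direction data) records how "coarse" it is, and two lines whose directions agree modulo $p^j$ but differ modulo $p^{j+1}$ can only overlap on a coset structure of controlled size. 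There are essentially $k+1$ scales, and the factor $(1+k^{-1}t)^k$ should emerge from distributing $t$ lines across these $k$ scales and taking a product (an AM--GM-type optimization where balancing gives the $k$-th power of $1 + t/k$).

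Concretely, I would build the witness greedily. Maintain a subset $T \subseteq S$ that is \emph{minimally uncovered} in the sense that no $t$ lines cover it but every proper subset is covered. Starting from any uncovered finite subset (which exists since $S$ is uncovered and one can take a finite uncovered subset by a compactness/minimality argument), repeatedly discard points whose removal keeps the set uncovered, until reaching a minimal uncovered set $T$. The entire content is then the bound $|T| \le t(1+k^{-1}t)^k$. For this I would analyze the minimal $T$ directly: by minimality, for each point $P \in T$ there is a covering $\mathcal{L}_P$ of $T \setminus \{P\}$ by $t$ lines, and $P$ is the \emph{unique} point of $T$ off $\bigcup \mathcal{L}_P$. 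I would track how many points of $T$ can accumulate on lines at each scale. A single line at scale $j$ (direction divisible by exactly $p^{k-j}$, say) meets $T$ in at most some quantity $N_j$ of points determined by the local geometry, and the product/convolution of the per-scale capacities over a budget of $t$ lines yields the target bound. The optimization $\max \prod (\text{scale capacities})$ subject to using $t$ lines total is what produces $(1+t/k)^k$.

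For the stronger claim in the regime $k > t$ and $t < p$, the improvement is that when the number of lines $t$ is smaller than both the number of available scales $k$ and the residue characteristic $p$, the scales cannot all be "filled": at most $t$ distinct scales can be occupied by $t$ lines, and with $t < p$ the branching at each occupied scale is bounded by a factor of $2$ rather than by a $p$-dependent quantity, since directions that would force larger overlap are ruled out by the $t < p$ condition preventing too many lines from clustering at one residue. This collapses the product $(1+k^{-1}t)^k$ to at most $2^t$: each of the at most $t$ lines contributes at most a doubling of the witness count rather than a full scale-factor. The main obstacle I anticipate is proving the per-scale capacity bound rigorously—precisely controlling $|T \cap L|$ for a line $L$ at a given scale in terms of the scale structure, and ensuring the greedy/minimality extraction interacts correctly with the scale bookkeeping so that the product bound is not lost when the same point is counted across several coverings $\mathcal{L}_P$. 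I would handle this by lifting the configuration to the $p$-adic tree: points of $(\Z/p^k\Z)^2$ correspond to nodes at depth $k$, lines correspond to geodesics/tubes, and the counting becomes a question of how many depth-$k$ nodes can be forced by $t$ tubes, which is naturally a tree-counting (multinomial) estimate giving exactly the $(1+t/k)^k$ and $2^t$ bounds.
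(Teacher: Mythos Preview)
Your overall shape is right: pass to a minimal nearly-covered set $T$, exploit the $k$ $p$-adic scales, and finish with an AM--GM optimization that turns a budget of $t$ distributed over $k$ levels into $(1+t/k)^k$ or $2^t$. But the mechanism you describe for \emph{why} a product appears is not correct, and as written the argument would only yield a sum. You propose to bound $|T\cap L|$ by a ``capacity'' $N_j$ depending on the scale of the line $L$, and then optimize over how the $t$ lines are distributed among scales. That gives $|T|\le \sum N_{j_i}$, not $\prod$; nothing in the per-line capacity picture produces the multiplicative recursion you need.

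The paper's proof obtains the product by a different bookkeeping. It does not assign scales to the covering lines; instead it fixes a single line $L$ and a $p^{-\ell}$-cube $Q$, defines $f(\ell,t)$ to be the maximum of $|T|$ for $T\subset L\cap Q$ with the nearly-covered property, and proves the recursion
\[
f(\ell,t)\ \le\ \max_{0\le j\le t}\,(j+1)\,f(\ell+1,\,t-j).
\]
The point is geometric: if $T$ meets $j+1$ of the $p^{-\ell-1}$-subcubes $Q_1,\dots,Q_{j+1}$ of $Q$, then for any $P\in T\cap Q'$ the covering $\mathcal{L}_P$ of $T\setminus\{P\}$ must spend at least one line on each of the $j$ subcubes not containing $P$ (because any $L'\in\mathcal L_P$ satisfies $L\cap L'\subset Q_i$ for a single $i$ by the structure of line intersections, \cref{Q-properties}), leaving at most $t-j$ lines for $Q'$. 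Iterating from $\ell=0$ to $\ell=k$ yields $f(0,t)\le\prod_{i=0}^{k-1}(j_i+1)$ with $\sum j_i\le t$, and only then does the AM--GM/extremal step give $(1+t/k)^k$ for $t\ge k$ and $2^t$ for $k\ge t$. Your ``$p$-adic tree'' remark at the end is close to this picture, but the substance is the subcube-budget recursion, which your write-up does not contain.

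One further correction: your explanation of the $2^t$ bound is off. It has nothing to do with $t<p$ limiting the branching at each scale; it is purely the combinatorial fact that $\prod(j_i+1)$ with $j_i\ge 0$ and $\sum j_i=t$ is maximized, when there are at least $t$ factors available ($k\ge t$), by taking $t$ of the $j_i$ equal to $1$.
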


The bound follows from an inductive counting argument. 
In the converse direction, in Section \ref{sec:fullExample} we construct nearly-covered sets in $(\Z/p^k\Z)^2$ with $k\geq 2$ that are larger than would be possible in $(\Z/p\Z)^2$. The construction relies on the availability of multiple scales. One special case is as follows.


\begin{theorem}\label{th:p-adicConstruction}
Assume that $p>k\geq 3$ and 
$2\leq t<\frac{1}{4}\sqrt{p}$. Let $t':=t+\lfloor (k-1)/2\rfloor-1$. Then there exists a set $S$ in $(\Z/p^k\Z)^2$ of size  
\begin{equation}\label{eq:S-specialcase}
|S|=  2^{\left\lfloor \frac{k-1}{2}\right\rfloor} 
\left( \binom{t+1}{2}+1\right) -1
\end{equation}
such that $S$ cannot be covered by $t'$ lines but $S\setminus \{x\}$ for all $x\in S$ can be covered by $t'$ lines.
\end{theorem}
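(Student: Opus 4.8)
The plan is to build $S$ by a recursive, scale-by-scale construction that exploits the one feature of $(\Z/p^k\Z)^2$ that is absent over a field: two lines whose directions agree modulo $p^i$ but differ modulo $p^{i+1}$ meet in a large set (of size $p^{k-i}$), so at a fine scale a single line can do the covering work of two coarser lines. The engine is a \emph{doubling step} that takes an $s$-critical configuration living at scale $p^{k'}$ (not coverable by $s$ lines, every proper subset coverable) and produces, using two further levels of the ring (scale $p^{k'+2}$), a configuration of twice the size that is critical for only one more line. Iterating this step $m:=\lfloor (k-1)/2\rfloor$ times explains both the factor $2^{\lfloor (k-1)/2\rfloor}$ and the increment $\lfloor (k-1)/2\rfloor-1$ in $t'$. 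For the base case I would take a configuration $B$ of $\binom{t+1}{2}+1$ points visible modulo $p$: the $\binom{t+1}{2}$ pairwise intersection points of $t+1$ lines in general position in $\F_p^2$, together with one more point, realized by standard residue representatives in $(\Z/p^k\Z)^2$. The hypotheses $p>k$ and $t<\frac14\sqrt p$ (which force $t^2\ll p$) guarantee that all the required general-position choices exist over $\F_p$ and that no accidental collinearities occur, so the covering behavior of $B$ over the residue field is exactly as intended and its fine-scale structure is free to be prescribed.

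The heart of the argument is the doubling step, and verifying its two halves. To form the doubled set I would take two copies of the current configuration that agree modulo $p^{k'}$ but are separated by a shift supported on the finest two scales, placed so that their coarse reduction is a single copy with doubled multiplicity. For the \emph{easy} direction — that every proper subset of the doubled set is coverable by $s+1$ lines — I would exhibit explicit covers: removing a point from the first copy lets that copy be covered by $s$ lines, and the construction arranges the shift so that each line covering the second copy can be replaced by a small-angle neighbor that also passes through the corresponding points of the first copy, so the two copies share all but one of their covering lines and only one extra line is spent. This is precisely where the two extra ring levels are needed: one level provides room for the fine shift, the other provides room for the compensating small-angle slope.

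The \emph{hard} direction, and what I expect to be the main obstacle, is showing that the final $S$ is not coverable by $t'$ lines. The natural approach is a scale-by-scale counting argument: given a hypothetical cover $\mathcal L$ with $|\mathcal L|=t'$, reduce modulo successive powers of $p$ and track, at each scale, how many lines of $\mathcal L$ are forced to be "spent" covering the part of $S$ that becomes visible there. The configuration is designed so that at each scale the reduced lines are essentially used up by the reduced configuration, yielding a deficit of exactly one line by the finest scale (matching the fact that $B$ already sits one point above the field-extremal critical size $\binom{t+1}{2}$ guaranteed by \Cref{ex:triangle}). The delicate point is ruling out cross-scale shortcuts — a single clever line absorbing many configuration points at the "wrong" scale — and this is exactly what the bound $t<\frac14\sqrt p$ controls: it limits how many points of a general-position configuration of size $O(t^2)$ any line (coarse- or fine-angled) can contain, so no line can beat the per-scale accounting. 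Making this pigeonhole quantitatively airtight across all $m+1$ scales simultaneously is the crux of the proof.

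Finally I would assemble the bookkeeping. Starting from the base of size $\binom{t+1}{2}+1$ and applying the doubling step $m=\lfloor (k-1)/2\rfloor$ times gives a set of size $2^{m}\bigl(\binom{t+1}{2}+1\bigr)$ whose line budget has grown to $t+m-1=t'$; a single point is then deleted to land on a genuinely critical set, producing the stated size $2^{\lfloor (k-1)/2\rfloor}\bigl(\binom{t+1}{2}+1\bigr)-1$ and accounting for the lone $-1$ corrections in both the cardinality and in $t'$. Throughout, $p>k$ ensures there are enough scales to run all $m$ doublings, and $t<\frac14\sqrt p$ is used both for the existence of the general-position base and for the intersection control in the non-coverability argument; reconciling the exact off-by-one increments at each step with these two global adjustments is routine but must be carried out with care.
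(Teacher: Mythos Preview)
Your overall architecture—start from a flat base configuration, iterate a doubling step across scales, gain one line per step—matches the paper's, but your doubling step as stated cannot raise the criticality threshold, and this is the essential gap.

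For the easy direction you (correctly) want each line covering one copy also to pass through the shifted copy; the paper arranges this by first \emph{flattening} the triangle via a linear map $F$ so that any line through two of its points has $p$-adic slope at most $p^{-k+1}$, and then shifting horizontally. But this same flatness means that any low-slope cover of $K_{m-1}$ is automatically a cover of the doubled set $K'_m=K_{m-1}\cup\bigl(K_{m-1}+\text{shift}\bigr)$. In particular $K'_m$ is coverable by $s{+}1$ lines (take a low-slope cover of $K_{m-1}\setminus\{x\}$ by $s$ lines and add the horizontal line through $x$), so it is \emph{not} critical for $s{+}1$; nor is it critical for $s$, since removing one point still leaves the other full copy of $K_{m-1}$, which already needs more than $s$ lines. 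Pure doubling is therefore sterile. The paper's missing ingredient is a single anchor point $\mathbf a_m$ adjoined at each step, placed \emph{outside} the horizontal strip containing $K'_m$. Any line through $\mathbf a_m$ meets $K'_m$ in at most one point (the slope bound forbids two), so peeling off that line leaves a full translate of $K_{m-1}$ to be covered by the remaining $t{+}m{-}2$ lines, and non-coverability of $K_m=K'_m\cup\{\mathbf a_m\}$ follows by a one-step induction—not the multi-scale pigeonhole you sketch.

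This also accounts for the bookkeeping you found awkward. The paper's base $K_0=F(T)$ is the flattened triangle with $\binom{t+1}{2}$ points, critical for $t{-}1$ lines; each step doubles \emph{and adds one anchor}, giving $|K_m|=2^m\binom{t+1}{2}+(2^m-1)$ and threshold $t{+}m{-}1$, with no deletion at the end. Your attempt to absorb the extra point into the base and then delete a point at the very end cannot work, since deleting a point from a genuinely critical set makes it coverable, not critical for one fewer line. Your proposed base (pairwise intersections of $t{+}1$ generic lines plus one point) is also not the right object: already for $t=3$ that $7$-point set is coverable by $3$ lines, so it is not critical for $t$.
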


In particular, if we fix the parameter $t$ in Theorem \ref{th:p-adicConstruction} and then allow both $p$ and $k$ to be large, the size of the set $S$ in the theorem is bounded from below by $2^{t'-t}$. This shows that the second upper bound in Theorem \ref{th:introP-adic} is close to optimal at least in some cases.

\subsection{General setup}

A {\em dimension vector} is a vector $\vec{v} = (v_1, \ldots,v_t)$ over the non-negative integers with $v_1 \geq \ldots \geq v_t$.
A set $S$ of points is {\em covered} by a set $\{L_1, \ldots, L_t\}$ of affine planes if it is contained in their union.
$S$ is covered by a dimension vector $\vec{v} = (v_1,\ldots, v_t)$ if it is covered by a set $\{L_1, \ldots, L_t\}$ of planes with $\dim(L_i) = v_i$ for each $i \in [t]$.
$S$ is covered by a set $V$ of dimension vectors if it is covered by some $\vec{v} \in V$.
$S$ is {\em nearly covered} by a set $V$ of dimension vectors if it is not covered by any $\vec{v} \in V$, but every proper subset $R \subset S$ is covered by some $\vec{v}_R \in V$.

For a finite set $V$ of dimension vectors and affine space $A$ over a field, denote by $C_A(V)$ the least $N$ such that, for every finite $S \subset A$, if every subset of $S$ of size at most $N$ is covered by $V$, then $S$ is covered by $V$.
Equivalently, $C_A(V)$ is equal to the largest number of points in any set that is nearly covered by $V$.

\subsection{Organization}
The proof of \cref{th:generalVBound,th:introHyperplaneCover} is in \cref{sec:fieldAlgebraic}.
\Cref{sec:generalAlgebraic} generalizes the algebraic proof given in \cref{sec:fieldAlgebraic} to the setting of Artinian rings.
In \cref{sec:combinatorial}, we discuss a generalization to the setting of matroids.
We consider the $p$-adic variant of our question in Section \ref{sec:padic-full}. \Cref{sec:p-adic} introduces the definitions and basic results on the geometry of modules over $\mathbb{Z}/p^k\mathbb{Z}$.
In \Cref{sec:fullExample}, we demonstrate a simple example over $(\mathbb{Z}/p^k\mathbb{Z})^2$ that has more points than that described in \cref{ex:triangle}, and then iterate it 
to obtain \cref{th:p-adicConstruction}.
\Cref{sec:p-adicBound} contains the proof of \cref{th:introP-adic}.

\section{Algebraic approach}\label{sec:fieldAlgebraic}

Fix a field $\F$ and consider a set $S$ of points in $\F^n$. We view $R=\F[x_1,\dots, x_n]$ as a $\F$-vector space. Let $R_d$, ($R_{\leq d}$) denote the subspaces of polynomials of degree $d$ (at most $d$).  We say that a subspace $Y$ of $R$ covers $S$ if $Y\subset I(S)$, where $I(S)$ denotes the ideal of polynomials vanishing on $S$. 

The following is trivial. 
\begin{lemma}\label{lem1field}
\begin{enumerate}
\item If $Y$ covers $S$, then so does any subspace of $Y$.
\item     If $Y_i$ covers $S_i$ for each $i$, then $\sum Y_i$ covers $\cap S_i$.
\end{enumerate}

\end{lemma}

\begin{proposition}\label{prop1}
    Let $S=\{P_1,\dots, P_r\}$ be a set of points in $\F^n$. Suppose that $S\setminus \{P_i\}$ is covered by $Y_i$. If 
    $$ r> \dim \sum_{i=1}^r Y_i - \dim Y_1+1,$$
    then one of $Y_i$ covers $S$. 
\end{proposition}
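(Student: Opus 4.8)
The plan is to translate the covering hypotheses into statements about a single evaluation map and then apply rank--nullity. Write $W = \sum_{i=1}^r Y_i \subseteq R$, and for a point $P$ let $\mathrm{ev}_P \colon R \to \F$ denote evaluation $f \mapsto f(P)$. The hypothesis that $Y_i$ covers $S \setminus \{P_i\}$ says exactly that every $f \in Y_i$ vanishes at $P_j$ for all $j \neq i$; equivalently, $\mathrm{ev}_{P_j}(Y_i) = 0$ for $j \neq i$. The conclusion ``some $Y_i$ covers $S$'' is the statement that $\mathrm{ev}_{P_i}(Y_i) = 0$ for some $i$.

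The key idea is to evaluate not at all $r$ points but only at the $r-1$ points $P_2, \ldots, P_r$, so as to exploit the special role of $Y_1$. Concretely, define the linear map $\Phi \colon W \to \F^{r-1}$ by $\Phi(f) = (f(P_2), \ldots, f(P_r))$. Two observations drive the argument. First, since $Y_1$ covers $S \setminus \{P_1\}$, every element of $Y_1$ vanishes at $P_2, \ldots, P_r$, so $Y_1 \subseteq \ker \Phi$ and hence $\dim \ker \Phi \geq \dim Y_1$. Second, for each $i \in \{2, \ldots, r\}$, the image $\Phi(Y_i)$ lies in the single coordinate line spanned by $e_i$, because $f \in Y_i$ vanishes at every $P_j$ with $j \neq i$.

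I would then argue by contradiction: suppose no $Y_i$ covers $S$. Then for each $i \geq 2$ there is some $f \in Y_i$ with $f(P_i) \neq 0$, so $\Phi(Y_i)$ is exactly the coordinate line $\langle e_i \rangle$. Summing over $i = 2, \ldots, r$ (and noting $\Phi(Y_1) = 0$) shows $\Phi(W) = \F^{r-1}$, i.e. $\Phi$ is surjective and $\dim \Phi(W) = r-1$. Rank--nullity now gives $\dim W = \dim \ker \Phi + \dim \Phi(W) \geq \dim Y_1 + (r-1)$, which rearranges to $r \leq \dim \sum_i Y_i - \dim Y_1 + 1$, contradicting the hypothesis. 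Hence some $Y_i$ covers $S$.

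The conceptual obstacle---and the only nonroutine step---is recognizing that one should drop the $P_1$-coordinate rather than evaluate at all $r$ points: the naive map $W \to \F^r$ is also surjective under the same contradiction hypothesis and only yields $r \leq \dim \sum_i Y_i$, losing the improvement by $\dim Y_1 - 1$. Placing all of $Y_1$ inside the kernel is precisely what converts that weaker inequality into the claimed bound, and it also explains why the term $\dim Y_1$ (rather than the dimension of some other summand) appears on the right-hand side.
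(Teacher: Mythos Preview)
Your proof is correct, but it proceeds differently from the paper. The paper argues directly via the chain $Y_1 \subseteq Y_1+Y_2 \subseteq \cdots \subseteq Y_1+\cdots+Y_r$: since this chain has $r$ terms while the total dimension increase is at most $\dim\sum_i Y_i - \dim Y_1 < r-1$, some inclusion must be an equality, giving $Y_{i+1} \subseteq Y_1+\cdots+Y_i$; then Lemma~\ref{lem1field} forces $Y_{i+1}$ to vanish at $P_{i+1}$, so it covers all of $S$. Your approach instead packages the covering data into a single evaluation map $\Phi\colon W \to \F^{r-1}$ and reads off the bound from rank--nullity. Both arguments yield the same conclusion (in fact both actually produce some $Y_i$ with $i\ge 2$), and your version has the pleasant feature of making transparent why the specific quantity $\dim Y_1$ appears: it is exactly what you push into the kernel by dropping the $P_1$-coordinate. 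On the other hand, the paper's chain argument generalizes verbatim to the Artinian-ring setting of \cref{sec:generalAlgebraic}, replacing dimension by length; your surjectivity step would require more care there, since $f(P_i)\neq 0$ in a ring with zero divisors does not guarantee that $\Phi(Y_i)$ is the full coordinate line.
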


\begin{proof}
Consider the chain of subspaces $Y_1\subseteq  Y_1+Y_2 \subseteq \dots \subseteq Y_1+\dots+ Y_r$. The condition $r> \dim \sum_{i=1}^t Y_i - \dim Y_1+1$ forces at least  one  equality in the chain, i.e., $Y_{i+1}\subseteq Y_1+\dots Y_i$ for some $i$. By \cref{lem1field},   $Y_{i+1}$ covers $\bigcap_{j=1}^i\{S\setminus \{P_i\}\}$ which contains $P_{i+1}$, thus $Y_{i+1}$ covers the whole $S$. 
\end{proof}

\begin{theorem}\label{th:fieldUpperBound}
    If $V$ is a set of dimension vectors, each with at most $t$ coordinates, and $k = \max_{\vec{v} \in V} \max_{i} v_i$,
    then
    \[C_A(V) \leq \binom{t + k + 1}{k+1}.\]
\end{theorem}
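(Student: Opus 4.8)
The plan is to feed \cref{prop1} with subspaces built from the near-covers, and then to bound the resulting quantity $\dim\sum_i Y_i-\dim Y_1+1$ by $\binom{t+k+1}{k+1}$. Let $S=\{P_1,\dots,P_r\}$ be nearly covered by $V$, so that for each $i$ there is a vector $\vec v_i\in V$ and planes $L^{(i)}_1,\dots,L^{(i)}_{m_i}$ (with $m_i\le t$ and $\dim L^{(i)}_j\le k$) whose union contains $S\setminus\{P_i\}$. Let $W^{(i)}_j\subseteq R_{\le 1}$ be the space of affine-linear forms vanishing on $L^{(i)}_j$; since $\dim L^{(i)}_j\le k$, this space has codimension $\dim L^{(i)}_j+1\le k+1$ in the $(n+1)$-dimensional space $R_{\le 1}$. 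Set $Y_i:=W^{(i)}_1\cdots W^{(i)}_{m_i}\subseteq R_{\le t}$, the span of all products of one form from each factor. Every such product vanishes wherever one of its factors does, so $Y_i\subseteq I(S\setminus\{P_i\})$; that is, $Y_i$ covers $S\setminus\{P_i\}$.

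The key point is that ``$Y_i$ covers all of $S$'' is not merely formal but forces a genuine cover of $S$ by $\vec v_i$. Indeed, if $P\notin\bigcup_j L^{(i)}_j$ then, since each $L^{(i)}_j$ is exactly the common zero set of $W^{(i)}_j$, for every $j$ there is a form $w_j\in W^{(i)}_j$ with $w_j(P)\neq 0$, and then the product $w_1\cdots w_{m_i}\in Y_i$ does not vanish at $P$. Contrapositively, if every element of $Y_i$ vanishes on $S$ then $S\subseteq\bigcup_j L^{(i)}_j$, so $S$ is covered by $\vec v_i\in V$, contradicting near-coverage. Hence no $Y_i$ covers $S$. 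Relabelling so that $\dim Y_1$ is maximal and applying the contrapositive of \cref{prop1}, the failure of every $Y_i$ to cover $S$ yields
\[ r \;\le\; \dim\sum_{i=1}^{r} Y_i-\dim Y_1+1. \]

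It remains to bound the right-hand side by $\binom{t+k+1}{k+1}$, and this is the heart of the matter. Since each $Y_i\subseteq R_{\le t}$ we always have $\dim\sum_i Y_i\le\dim R_{\le t}=\binom{n+t}{t}$; because $\dim Y_1\ge 1$, this already settles the all-hyperplane case $k=n-1$, where $\binom{n+t}{t}=\binom{t+k+1}{k+1}$, recovering \cref{th:introHyperplaneCover}. For $k<n-1$ the inequality must instead exploit that $Y_1$ is large, and I would prove the sharper estimate $\dim\sum_i Y_i-\dim Y_1\le\binom{t+k+1}{k+1}-1$ by analysing products of subspaces of codimension at most $k+1$ in $R_{\le 1}$ — equivalently, via a Hilbert-function (colength in degree $t$) estimate for the product ideal $\prod_j I(L^{(i)}_j)$. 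I expect this bound to be clean and to match the two extremes, $k=0$ (where covers are by points and it returns $r\le t+1=\binom{t+1}{1}$) and $k=n-1$ above. The main obstacle is the degenerate regime: when a cover repeats a plane, $\operatorname{codim}Y_1$ can far exceed $\binom{t+k+1}{k+1}-1$, so no single product controls the bound and one must use the $r$ covers jointly. The leverage should come from the fact that the sets $S\setminus\{P_i\}$ pairwise share $r-2$ points, forcing the $Y_i$ to overlap heavily so that $\dim\sum_i Y_i$ cannot exceed $\dim Y_1+\binom{t+k+1}{k+1}-1$; making this interaction precise — most plausibly by inducting on $t$ (peeling off one plane of a cover) or by a direct colength computation that is uniform in $n$ — is the step I expect to require the most care.
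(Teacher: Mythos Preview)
Your setup --- building $Y_i\subseteq R_{\le t}$ from products of linear forms and feeding them to \cref{prop1}, together with the observation that ``$Y_i$ covers $S$'' forces $S\subseteq\bigcup_j L^{(i)}_j$ --- is exactly right and matches the paper. You also correctly isolate the difficulty: the crude bound $\dim\sum_i Y_i\le\dim R_{\le t}=\binom{n+t}{t}$ only yields the claimed $\binom{t+k+1}{k+1}$ when $k=n-1$.

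The gap is your plan for $k<n-1$. You propose a Hilbert-function bound on $\dim\sum_i Y_i-\dim Y_1$, but you yourself identify the obstruction: degenerate covers can make $\operatorname{codim}Y_1$ arbitrarily large, so no estimate on a single $Y_i$ suffices, and the ``heavy overlap among the $Y_i$'' heuristic is not made precise. The paper sidesteps all of this with one preprocessing move you are missing: before any algebra, apply a \emph{generic linear projection} $\pi$ from the ambient space down to $\F^{k+1}$ (passing to a field extension if needed so that the center of projection avoids every plane spanned by points of $S$). Because every plane appearing in every cover has dimension at most $k$, each $L^{(i)}_j$ projects to a plane of the same dimension, so $\pi(S)$ is still nearly covered by $V$; but now the ambient dimension is $k+1$, and $\dim R_{\le t}=\binom{t+k+1}{k+1}$ exactly. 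Your own argument then finishes in one line, with no codimension analysis required.
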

\begin{proof}
    Let $S'=\{P_1', \ldots, P_r'\}$ be a set of points in an affine $A$ space over a field $\F'$.
    For each $i \in [r]$, suppose that $A_i'$ is an $V$-covering set for $S \setminus \{P_i\}$.

    Let $\pi$ be a generic projection from $A$ to $\F^{k+1}$, where $\F$ is possibly an extension of $\F'$.
    More precisely, $\pi$ is projection from a point that is not contained in any plane spanned by the points of $S'$.
    For $i \in [r]$, let $P_i = \pi(P_i')$ and let $A_i = \pi(A_i')$.
    Let $S = \{P_1, \ldots, P_r\}$.
    Note that $A_i$ is a $V$-covering set for $S \setminus \{P_i\}$.

    Fix an $i$, and let $\Gamma_j$ be the vanishing ideal of the affine plane $L_j \in A_i$.
    Each $\Gamma_j$ is generated by linear forms.
    The product $\Gamma$ of those ideals $\Gamma_j$ will be in the ideal $I(S \setminus \{P_i\})$.
    Each generator of $\Gamma$, being products of at most $t$ linear forms, lives in $R_{\leq t}$.
    Thus, the span of those generators of $\Gamma$ lives in $R_{\leq t}$ and covers $S \setminus \{P_i\}$.
    Since $\dim R_{\leq t} = \binom{k+1+t}{k+1}$, the conclusion of the theorem follows directly from \cref{prop1}.
\end{proof}

\Cref{ex:triangle} shows that \cref{th:fieldUpperBound} is tight in the case that $V = \{(k,k,\ldots,k)\}$, but it is not tight in general.
In the remainder of this section, we show that \Cref{prop1} can also be used to prove tight bounds for certain dimension vectors with all entries either $k$ or zero.
We first give the construction, which generalizes the construction given in \cref{ex:triangle}.

\begin{example}\label{ex:missingTriangle}
    Let $0 \leq s \leq t$, and let $t_1 = t - s$ and $t_2 = \binom{s + n}{n} - 1$.
    Let $T = T_{n,t} \subset \mathbb{R}^n$ be the set of integer points such that $x_i \geq 0$ for $i \in [n]$ and $x_1 + x_2 + \ldots + x_n \leq t$.
    Note that $|T| = \binom{t+n}{n}$.
    We will show that $T$ is not contained in the union of any collection of $t_1$ hyperplanes and $t_2$ points, but that every proper subset of $T$ is contained in such a union.
    This implies that, if $\vec{v}$ is the dimension vector with $t$ entries, of which $t_1$ are equal to $n-1$ and $t_2$ are zeros, then $C(\vec{v}) \geq \binom{t + n}{n}$.

    First, we show that $T$ is not contained in the union of $t_1$ hyperplanes and $t_2$ points.
    We proceed by induction on $n$, $t_1$, and $t_2$.
    In the case $n=1$, hyperplanes are just points so the claim is that $|T|=t+1 > t_1 + t_2 = t_1+ \binom{s+1}{1}-1 = t$ points, which is true.
    In the case $t_1 = 0$, the claim is that $|T| = \binom{s+n}{n} > t_2 = \binom{s+n}{n} - 1$, which is true.
    The case $t_2 = 0$ is exactly \cref{ex:triangle}.
    
    Suppose, toward a contradiction, that there is a set $\mathcal{L}$ with $t_1$ hyperplanes and $t_2$ points that covers $T$.
    Let $H_0$ be the hyperplane defined by $x_1 = 0$.
    Note that $T \cap H_0$ is a copy of $T_{n-1,t}$, and $T \setminus H_0$ is a copy of $T_{n,t-1}$.
    If $H_0 \in \mathcal{L}$, then $T \setminus H_0$ is covered by $t_1 - 1$ hyperplanes and $t_2$ points, which contradicts the inductive hypothesis on $t_1$.
    Otherwise, $H_0 \cap T$ is covered by $t_1$ hyperplanes and $t_2' \leq t_2$ points.
    By induction on $n$, we have that $t_2' \geq \binom{s+n-1}{n-1}$.
    Hence, $T \setminus H_0$ is covered by $t_1$ hyperplanes and $t_2 - t_2' \leq \binom{s+n}{n} - \binom{s+n-1}{n-1} - 1 \leq \binom{s+n-1}{n} - 1$ points.
    This contradicts the inductive hypothesis on $t_2$.
    
    Now we show that each proper subset of $T$ is contained in the union of $t_1$ hyperplanes and $t_2$ points.
    Let $P \in T$ be an arbitrary point.
    For any integer point $A = (a_1, a_2, \ldots, a_n)$ such that $a_1+\ldots+a_n+s\leq t$, let $S_A$ be the set of integer points such that $x_i \geq a_i$ for $i \in [n]$ and $x_1+x_2+\ldots +x_n \leq a_1 + a_2 + \ldots +a_n + s$.
    Note that $|S| = \binom{s+n}{n} = t_2+1$.
    Choose $A$ so that $P \in S_A \subseteq T$ - see \cref{fig:missingTriangle} for an illustration.
    It is easy to check that $T \setminus \{P\}$ is covered by the $t_2$ points of $S \setminus \{P\}$ together with the hyperplanes defined by the equations
    \[\{x_i = c: i \in [n], c \in \mathbb{Z} \cap [0,a_1)\} \cup \{x_1+x_2+\ldots+x_n = c: c \in \mathbb{Z} \cap (a_1 + a_2 + \ldots + s,t]\}.\]
    The total number of such hyperplanes is $a_1+a_2+\ldots+a_n + (t-a_1-\ldots-a_n - s) = t_1$.
\end{example}

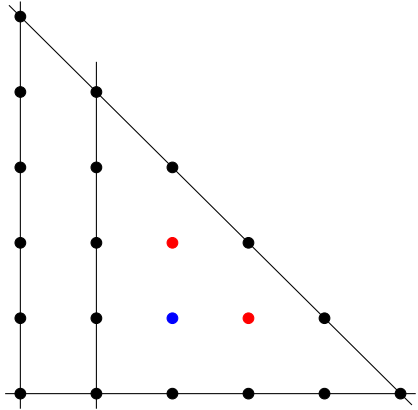
\begin{figure}[h!]
\begin{center}
\begin{tikzpicture}[scale=1]

\filldraw[black] (0,0) circle (2pt);
\filldraw[black] (1,0) circle (2pt);
\filldraw[black] (2,0) circle (2pt);
\filldraw[black] (3,0) circle (2pt);
\filldraw[black] (4,0) circle (2pt);
\filldraw[black] (5,0) circle (2pt);
\filldraw[black] (0,1) circle (2pt);
\filldraw[black] (1,1) circle (2pt);
\filldraw[blue] (2,1) circle (2pt);
\filldraw[red] (3,1) circle (2pt);
\filldraw[black] (4,1) circle (2pt);
\filldraw[black] (0,2) circle (2pt);
\filldraw[black] (1,2) circle (2pt);
\filldraw[red] (2,2) circle (2pt);
\filldraw[black] (3,2) circle (2pt);
\filldraw[black] (0,3) circle (2pt);
\filldraw[black] (1,3) circle (2pt);
\filldraw[black] (2,3) circle (2pt);
\filldraw[black] (0,4) circle (2pt);
\filldraw[black] (1,4) circle (2pt);
\filldraw[black] (0,5) circle (2pt);

\draw[black] (0,-.2)--(0,5.2);
\draw[black] (1,-.2)--(1,4.4);
\draw[black] (-.2,0)--(5.2,0);
\draw[black] (5.15,-.15)--(-.15,5.15);

\end{tikzpicture}
\end{center}
\caption{Possible set of points and lines described in \Cref{ex:missingTriangle} with $t=5$ and $s=1$. The point $P$ is blue, the additional points of $S$ are red, and the points of $T \setminus S$ are black. Four lines and two points suffice to cover $T \setminus \{P\}$.} \label{fig:missingTriangle}
\end{figure}

Here comes the proof that shows that this example is as large as possible.

\begin{theorem}\label{th:withZeros}
Let $n \geq 2$, let $0 \leq s \leq t$, let $t_1 = t - s$, and let $t_2 = \binom{s +n}{n}-1$.
Let $\vec{v}$ be a dimension vector with $t_1$ entries $n-1$ and $t_2$ zeros.
Then, $C(\vec{v}) = \binom{t_1 + r + n}{n}$.
\end{theorem}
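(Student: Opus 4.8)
The lower bound $C(\vec v)\ge\binom{t_1+s+n}{n}$ is exactly \cref{ex:missingTriangle} (since $t_1+s=t$, this is $\binom{t+n}{n}$), so the plan is to prove the matching upper bound: every set $S$ nearly covered by $\vec v$, where I write $\vec v=\vec v(t_1,t_2)$ for the dimension vector with $t_1$ entries equal to $n-1$ and $t_2=\binom{s+n}{n}-1$ zeros, has $|S|\le\binom{t_1+s+n}{n}$. It is convenient to reformulate coverability: $S$ is covered by $\vec v(t_1,t_2)$ precisely when all but at most $t_2$ of its points lie on a union of $t_1$ hyperplanes. After a generic projection to $\F^n$ (as in the proof of \cref{th:fieldUpperBound}), which preserves the dimensions of the covering planes and the distinctness of points, I may assume $S\subset\F^n$ and that the covering $(n-1)$-planes are genuine hyperplanes.

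The main count comes from \cref{prop1}. For each $P_i\in S$ I fix a covering of $S\setminus\{P_i\}$ whose hyperplanes have product form $\ell^{(i)}$ (of degree $t_1$), let $E_i$ be its set of off-hyperplane points, so $|E_i|\le t_2$, and set $Y_i=\ell^{(i)}\cdot\{h\in R_{\le s}:h|_{E_i}=0\}$. One checks that $Y_i\subseteq I(S\setminus\{P_i\})\cap R_{\le t}$, where $t=t_1+s$, and that $\dim Y_i=\dim\{h\in R_{\le s}:h|_{E_i}=0\}\ge\binom{s+n}{n}-t_2=1$. Since $\sum_i Y_i\subseteq R_{\le t}$ has dimension at most $\dim R_{\le t}=\binom{t+n}{n}$ and $\dim Y_1\ge1$, \cref{prop1} yields $|S|\le\binom{t+n}{n}$ unless some $Y_{i+1}$ covers all of $S$, i.e.\ $Y_{i+1}\subseteq I(S)$.

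So suppose $Y_{i+1}\subseteq I(S)$. Every point of $S$ off the hyperplanes of the $(i{+}1)$-st covering is then a common zero of all $h\in R_{\le s}$ vanishing on $E_{i+1}$; in particular the set $B$ of off-hyperplane points of $S$ satisfies $B\subseteq E_{i+1}\cup\{P_{i+1}\}$, so $|B|\le t_2+1$. If $|B|\le t_2$, then $S$ is itself covered by $\vec v$, contradicting that $S$ is nearly covered. Hence $|B|=t_2+1=\binom{s+n}{n}$ and, because adjoining $P_{i+1}$ to $E_{i+1}$ imposes no new condition on $R_{\le s}$, the set $B$ fails to impose independent conditions on $R_{\le s}$; equivalently $B$ lies on a hypersurface $\{g=0\}$ with $0\ne g\in R_{\le s}$, and $S\subseteq\big(\bigcup_{m=1}^{t_1}H_m\big)\cup\{g=0\}$. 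This degenerate configuration is exactly where the algebraic count stops producing a $\vec v$-covering, and it is the crux of the argument.

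I would finish by a double induction on $n+t_1$ (with $s$ fixed, $t_2=\binom{s+n}{n}-1$ determined by $n$), handling the degenerate case by slicing. The base cases $n=1$ and $t_1=0$ are immediate, and $s=0$ is \cref{th:introHyperplaneCover}. For the inductive step I want a hyperplane $H$ for which $S\cap H$ is nearly covered in $H\cong\F^{n-1}$ by the corresponding vector, giving $|S\cap H|\le\binom{t_1+s+n-1}{n-1}$, and $S\setminus H$ is nearly covered by $\vec v(t_1-1,t_2)$, giving $|S\setminus H|\le\binom{t_1-1+s+n}{n}$; Pascal's identity then yields $|S|\le\binom{t_1+s+n}{n}$. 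A useful handle is that for \emph{every} hyperplane $H$ the set $S\setminus H$ cannot be covered by $\vec v(t_1-1,t_2)$, since otherwise adding $H$ would cover $S$; this supplies the ``not covered'' half of near-coveredness of that slice for free. The main obstacle is the other half: ensuring that \emph{both} slices have all proper subsets covered, which fails for a careless choice of $H$ (slicing along one of the $H_m$ leaves $t_2+1$ exceptional points in $S\setminus H$). Resolving this requires choosing $H$ so that it is forced into the coverings — e.g.\ a hyperplane through one of the degenerate points of $B$ that carries a maximal covered subset — and using the degenerate structure $S\subseteq(\bigcup H_m)\cup\{g=0\}$ together with the near-covering of $S$ to transfer coverings of each $S\setminus\{x\}$ to the two slices. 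I expect this slice-selection step to be the hardest part of the proof.
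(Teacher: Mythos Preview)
Your setup matches the paper's proof exactly: each $S\setminus\{P_i\}$ lies in the zero set of a degree-$(t_1+s)$ polynomial (the product of the $t_1$ hyperplane forms with any degree-$\le s$ form through the $t_2$ exceptional points), and then \cref{prop1} with $\sum Y_i\subseteq R_{\le t}$ yields $|S|\le\binom{t+n}{n}$. The paper's proof simply says ``the rest of the argument follows as in the proof of \cref{th:fieldUpperBound}'' and stops there. You go further and correctly observe that the conclusion of \cref{prop1} --- that some $Y_{i+1}$ covers $S$ --- is not obviously a contradiction here: in \cref{th:fieldUpperBound} the space $Y_i$ generates the ideal of the union of covering planes, so $Y_i\subseteq I(S)$ forces a $\vec v$-covering of $S$; but your $Y_i=\ell^{(i)}W_i$ only places the off-hyperplane points of $S$ in the common degree-$\le s$ zero locus of $W_i$, which can strictly contain $E_i$ (two collinear points when $n=2$, $s=1$, for instance). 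This is a genuine subtlety that the paper's one-line proof glosses over.

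However, your proposed resolution via a double induction on $n+t_1$ with slicing is not a proof: you explicitly leave the slice-selection step open, and the obstruction you name is real --- slicing along one of the $H_m$ leaves $t_2+1$ exceptional points in the complement, and the coverings of $S\setminus\{x\}$ for other $x$ need not use the same hyperplanes, so there is no reason they restrict well to either piece. So the proposal has a gap precisely where it departs from the paper's argument. Rather than pursuing this inductive detour, it would be cleaner to try to show directly that, for a nearly-covered $S$, one can always \emph{choose} the covering of $S\setminus\{P_i\}$ so that some degree-$\le s$ polynomial vanishes on $E_i$ but not on $P_i$; that would render your degenerate case vacuous and make the paper's direct application of \cref{prop1} fully justified.
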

\begin{proof}
    Let $X = \{P_1,\ldots,P_n\}$ be a set of points such that $X - \{P_i\}$ is covered by a set $\mathcal{L}_i$ of $t_1$ lines and $t_2$ points, but $X$ itself is not covered by any such set.
    Since any set of $t_2 = \binom{s+n}{n}-1$ points is contained in the zero set of a polynomial of degree $s$, we have that $X - \{P_i\}$ is contained in the zero set of a polynomial of degree $t_1 + s$.
    The rest of the argument follows as in the proof of \cref{th:fieldUpperBound}.
\end{proof}

\section{Generalized algebraic approach}\label{sec:generalAlgebraic}

In this section we prove a generalized version of \Cref{th:generalVBound} using ring-theoretic methods. While it gives the existence of a bound over Artinian rings, effective applications are more limited than the field case: we need estimates on degree of generators of vanishing ideals, which in this situation is much more subtle. Even the concept of ``lines" and ``planes" over rings such as $\Z/p^k\Z$ requires more care and alternative approaches, see Section \ref{sec:padic-full}.

Fix an Artinian ring $\F$ and consider a set $X$ of points in $\F^n$. We view $R=\F[x_1,\dots, x_n]$ as a free $\F$-module. Let $R_d$, ($R_{\leq d}$) denote the free, finitely generated submodule of polynomials of degree $d$ (at most $d$).  We say that an $\F$-submodule $Y$ of $R$ covers $X$ if $Y\subset I(X)$, where $I(X)$ denotes the ideal of polynomials vanishing on $X$. 

The following is trivial. 
\begin{lemma}\label{lem1}
\begin{enumerate}
\item If $Y$ covers $X$, then so does any submodule of $Y$.
\item     If $Y_i$ covers $X_i$ for each $i$ inside a finite set $S$, then $\sum Y_i$ covers $\cap X_i$.
\end{enumerate}    
\end{lemma}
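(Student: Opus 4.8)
The plan is to derive both parts from two elementary facts about the vanishing-ideal operator $I(\cdot)$: it is inclusion-reversing, and each of its values is a genuine ideal of $R$ (in particular an $\F$-submodule closed under addition). Neither fact uses that $\F$ is a field, so the argument is essentially identical to the one underlying \cref{lem1field}.

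For part (1) I would simply invoke transitivity of inclusion. If $Y$ covers $X$ then $Y \subseteq I(X)$ by definition, and any submodule $Y' \subseteq Y$ satisfies $Y' \subseteq Y \subseteq I(X)$, so $Y'$ covers $X$. There is nothing to verify beyond unwinding the definition of ``covers''.

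For part (2) I would first record that $I$ reverses inclusions: a polynomial that vanishes on a set vanishes on every subset, so $X \subseteq X'$ forces $I(X') \subseteq I(X)$. Applying this to the containments $\bigcap_j X_j \subseteq X_i$ gives $I(X_i) \subseteq I\!\left(\bigcap_j X_j\right)$ for each $i$, and combining with the hypothesis $Y_i \subseteq I(X_i)$ yields $Y_i \subseteq I\!\left(\bigcap_j X_j\right)$ for every $i$. Finally, since $I\!\left(\bigcap_j X_j\right)$ is an ideal, hence closed under finite $\F$-linear combinations, the sum $\sum_i Y_i$ is again contained in it, which is precisely the assertion that $\sum_i Y_i$ covers $\bigcap_j X_j$.

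There is no real obstacle here; the lemma is a formal consequence of the definitions, exactly as the paper signals by calling it trivial. The only point worth a moment's attention in the Artinian setting is that the closure-under-addition step in part (2) relies on $I\!\left(\bigcap_j X_j\right)$ being an ideal (equivalently, an $\F$-submodule of the free module $R$). This holds verbatim, since the common vanishing locus of a collection of polynomials is closed under the ring operations regardless of whether $\F$ is a field, and the finiteness of the index set $S$ ensures the sum $\sum_i Y_i$ is a well-defined submodule.
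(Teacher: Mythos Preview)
Your proof is correct and matches the paper's approach: the paper itself offers no proof beyond declaring the lemma trivial, and your argument is exactly the routine unwinding of the definition of ``covers'' via the inclusion-reversing property of $I(\cdot)$ and the fact that $I(\cdot)$ is an $\F$-submodule. There is nothing to add.
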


Recall the length of a finitely generated $\F$-module $M$, denoted by $\len_{F}(M)$, is the supremum of lengths of all chains of submodules in $M$.  As $F$ is Artinian, this is finite. If $M$ is free of rank $r$, $M\cong \F^r$, then $\len(M)=r\len(R)$. For reference on basic facts about length,  see \cite[\href{https://stacks.math.columbia.edu/tag/00IU}{Tag 00IU}]{stacks-project}.

\begin{proposition}\label{prop2.1}
    Let $X=\{P_1,\dots, P_r\}$ be a set of points in $\F^n$. Suppose that $X-\{P_i\}$ is covered by $Y_i$. If 
    $$ r> \len(\sum_{i=1}^r Y_i) - \len Y_1+1,$$
    then one of $Y_i$ covers $X$. 
\end{proposition}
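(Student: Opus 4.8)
The plan is to mimic the proof of \cref{prop1} exactly, replacing vector-space dimension by the length of a finitely generated $\F$-module over the Artinian ring $\F$. The key structural fact driving \cref{prop1} is that in a strictly increasing chain of subspaces, the dimension strictly increases at each step, so a chain of length exceeding the total dimension must contain a repeated term (an equality). The analogous fact over an Artinian ring is that length is strictly monotone on proper submodule inclusions: if $N \subsetneq M$ are finitely generated $\F$-modules, then $\len(N) < \len(M)$, and length is additive along the chain. This is exactly the input I would invoke from the cited Stacks Project reference on length.

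First I would form the chain of submodules
\[
Y_1 \subseteq Y_1 + Y_2 \subseteq \cdots \subseteq Y_1 + \cdots + Y_r,
\]
all living inside $\sum_{i=1}^r Y_i$. There are $r$ inclusions in this chain counting from $Y_1$ (that is, $r-1$ successive steps). Since $\len(Y_1 + \cdots + Y_r) - \len(Y_1)$ counts the total amount by which length can increase across the whole chain, and each \emph{strict} inclusion $Y_1 + \cdots + Y_i \subsetneq Y_1 + \cdots + Y_{i+1}$ forces the length to rise by at least $1$, the hypothesis
\[
r > \len\Bigl(\sum_{i=1}^r Y_i\Bigr) - \len(Y_1) + 1
\]
means there are too many steps for all of them to be strict. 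Hence at least one inclusion is an equality, i.e.\ $Y_{i+1} \subseteq Y_1 + \cdots + Y_i$ for some $i$.

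Next I would extract the covering conclusion using \cref{lem1}. By part (2) of \cref{lem1}, the sum $Y_1 + \cdots + Y_i$ covers $\bigcap_{j=1}^i (X - \{P_j\})$, which contains $P_{i+1}$ (since removing points $P_1,\dots,P_i$ never removes $P_{i+1}$); together with the fact that each $Y_j$ already covers $X - \{P_j\}$, this shows $Y_1 + \cdots + Y_i$ covers $X$. Because $Y_{i+1} \subseteq Y_1 + \cdots + Y_i$, part (1) of \cref{lem1} gives that $Y_{i+1}$ covers $X$ as well, which is the desired conclusion.

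I anticipate the only genuine subtlety, as opposed to the field case, lies in justifying the strict monotonicity and additivity of $\len$ on the chain: over an Artinian ring, length behaves like dimension precisely because $\len$ is additive on short exact sequences and positive on nonzero finitely generated modules, so $\len(N) < \len(M)$ whenever $N \subsetneq M$. This is standard, and I would simply cite the Stacks Project tag already referenced in the excerpt rather than reprove it. The remaining combinatorial counting and the application of \cref{lem1} are formally identical to the field proof, so no new difficulty arises there.
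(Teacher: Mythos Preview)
Your approach is exactly the paper's: form the chain $Y_1 \subseteq Y_1+Y_2 \subseteq \cdots \subseteq Y_1+\cdots+Y_r$, use strict monotonicity of length over an Artinian ring to force an equality $Y_{i+1} \subseteq Y_1 + \cdots + Y_i$, then finish via \cref{lem1}. The chain-length step is fine.

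However, your final paragraph contains a slip. You assert that ``$Y_1 + \cdots + Y_i$ covers $X$,'' but this does not follow. By \cref{lem1}(2), the sum covers only the \emph{intersection} $\bigcap_{j\le i}(X\setminus\{P_j\}) = X\setminus\{P_1,\dots,P_i\}$; a typical element $f_1+\cdots+f_i$ with $f_j\in Y_j$ evaluated at $P_1$ gives $f_1(P_1)$, which need not vanish. So applying \cref{lem1}(1) to $Y_{i+1}\subseteq Y_1+\cdots+Y_i$ only tells you $Y_{i+1}$ covers $X\setminus\{P_1,\dots,P_i\}$, not $X$. The fix (and this is what the paper does) is to combine that conclusion---in particular, $Y_{i+1}$ covers $\{P_{i+1}\}$---with the \emph{hypothesis} that $Y_{i+1}$ covers $X\setminus\{P_{i+1}\}$, yielding $Y_{i+1}\subseteq I(\{P_{i+1}\})\cap I(X\setminus\{P_{i+1}\})=I(X)$.
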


\begin{proof}
Consider the chain of submodule $Y_1\subseteq  Y_1+Y_2 \subseteq \dots \subseteq Y_1+\dots+ Y_r$. The condition $r> \len(\sum_{i=1}^t Y_i) - \dim Y_1+1$ forces at least  one  equality in the chain, i.e., $Y_{i+1}\subseteq Y_1+\dots Y_i$ for some $i$. By \cref{lem1},   $Y_{i+1}$ covers $\cap_{j=1}^i\{X-\{P_i\}\}$ which contains $P_{i+1}$, thus $Y_{i+1}$ covers the whole $X$. 
\end{proof}

To state the next Theorem, we need one more definition. 

\begin{definition}
For an ideal $I\subset R$, we let $g(I)$ be the infimum (over all finite system of generators $G$  of $I$) of the maximal degree of the polynomials in $G$. 
\end{definition}

\begin{theorem}\label{thmF}
Let $X=\{P_1,\dots, P_r\}$ be a set of points in $\F^n$. Suppose that $I_i\subset R$ covers $X-\{P_i\}$. If $d$ is the maximal degree of $g(I_i)$, and $r\geq (\len(\F))\binom{n+d}{d}+1$, then at least one $I_i$ covers $X$. 
\end{theorem}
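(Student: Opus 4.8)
The plan is to follow the pattern already established in \Cref{th:fieldUpperBound}, but replacing the dimension-counting argument over a field by a length-counting argument over the Artinian ring $\F$, and invoking \Cref{prop2.1} in place of \Cref{prop1}. The key observation is that each ideal $I_i$ covering $X-\{P_i\}$ can, by definition of $g(I_i)$, be generated by polynomials of degree at most $d$. First I would fix an index $i$ and let $Y_i$ be the $\F$-submodule of $R_{\leq d}$ spanned by a generating set of $I_i$ of maximal degree at most $d$. Since these generators lie in $I_i \subseteq I(X-\{P_i\})$, the submodule $Y_i$ covers $X-\{P_i\}$ in the sense of \Cref{lem1}. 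Thus we have submodules $Y_1, \ldots, Y_r$ of the free module $R_{\leq d}$, each covering the corresponding $X-\{P_i\}$, and we are precisely in the situation of \Cref{prop2.1}.

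The next step is to bound the length appearing in the hypothesis of \Cref{prop2.1}. Since every $Y_i$ is contained in $R_{\leq d}$, we have $\sum_{i=1}^r Y_i \subseteq R_{\leq d}$, and by monotonicity of length $\len(\sum_{i=1}^r Y_i) \leq \len(R_{\leq d})$. The module $R_{\leq d}$ is free over $\F$ of rank $\binom{n+d}{d}$ (the number of monomials in $n$ variables of degree at most $d$), so $\len(R_{\leq d}) = \binom{n+d}{d}\,\len(\F)$ by the formula for the length of a free module recalled just before \Cref{prop2.1}. Dropping the term $-\len(Y_1)$, which only makes the bound smaller, the hypothesis $r > \len(\sum_i Y_i) - \len(Y_1) + 1$ of \Cref{prop2.1} is certainly satisfied once $r \geq \len(\F)\binom{n+d}{d} + 1$, which is exactly the hypothesis of the theorem. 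Applying \Cref{prop2.1} then yields that some $Y_i$, and hence the corresponding $I_i \supseteq Y_i$ (again by \Cref{lem1}, part 1, read for the containing module), covers all of $X$.

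The one point requiring care, and the step I expect to be the main obstacle, is the passage from ``$g(I_i) \leq d$'' to ``$I_i$ is generated by degree-$\leq d$ polynomials whose $\F$-span covers $X-\{P_i\}$ and sits inside the free module $R_{\leq d}$.'' Over a field this is immediate, but over an Artinian ring one must be slightly careful that $R_{\leq d}$ is genuinely free of the expected rank and that length is additive along the relevant chains; both facts follow from the standard properties of length cited in the excerpt, but they are the reason the clean field argument does not transfer verbatim. Once these module-theoretic preliminaries are in hand, the argument is a direct translation of the field case, and no estimate on $d$ itself is needed here—the quantity $d = \max_i g(I_i)$ is simply taken as given, which is why, as the section's preamble notes, effective applications hinge on separately controlling the generator degrees of vanishing ideals over $\F$.
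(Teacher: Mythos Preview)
Your proposal is correct and follows essentially the same route as the paper: define $Y_i$ as the $\F$-span of a degree-$\leq d$ generating set for $I_i$, observe that $\sum_i Y_i \subseteq R_{\leq d}$, compute $\len(R_{\leq d}) = \len(\F)\binom{n+d}{d}$, and apply \Cref{prop2.1}. One small slip: at the end you invoke \Cref{lem1}(1) to pass from ``$Y_i$ covers $X$'' to ``$I_i$ covers $X$,'' but that lemma goes the other way (to submodules); the correct reason is simply that $Y_i$ generates $I_i$ as an ideal and $I(X)$ is an ideal, so $Y_i \subseteq I(X)$ forces $I_i \subseteq I(X)$.
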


\begin{proof}
    Fix an $i$. By definition, $I_i$ can be generated by a system of polynomials $G_i$ in $R_{\leq d}$. Let $Y_i$ denote the $\F$-submodule generated by $G_i$. Each $Y_i$ covers $X-\{P_i\}$, and $\sum Y_i \subset R_{\leq d}$. Finally, note that $R_{\leq d}$ is a free $\F$-module of rank $\binom{n+d}{d}$, thus its length is $\len(\F)\binom{n+d}{d}$. We now apply \cref{prop2.1} to finish the proof. 
\end{proof}

\begin{example}
We give some basic examples of length to apply the bound in \cref{thmF}.  We have $\len(\F)=1$ if and only if $\F$ is a field. If $\F=\Z/n\Z$ with $n=\prod p_i^{n_i}$ with distict primes $p_i$ and $n_i\geq 1$,  then $\len(\F) = \prod p_i^{n_i-1}$. 
\end{example}

\begin{remark}
    We can extend our results to all rings by localization to reduce to the Artinian case. For example, over $\Z^n$, we can view the points as in $\Q^n$. 
\end{remark}

Although \cref{thmF} can be applied in the case that each $I_i$ is defined by a product of linear factors, this is not comparable to \cref{th:introP-adic}, since the set of solutions to a linear equation is not necessarily a hyperplane as defined e.g. in \cite{laba2024}.

\section{Combinatorial bounds}\label{sec:combinatorial}

The algebraic proof of \cref{th:fieldUpperBound} does not seem to apply to non-representable matroids.
In this section, we give an elementary combinatorial argument that gives an upper bound on $C(V)$ for matroids, albeit with a quantitatively weaker bound than that given in \cref{th:fieldUpperBound} for representable matroids.

For a set $V$ of dimension vectors, $C_\mathcal{M}(V)$ is defined analogously to $C_A(V)$.
In particular, we say that a matroid $M$ is {\em covered} by a dimension vector $\vec{v}=(v_1,\ldots,v_t)$ if there is a set $\{L_1, \ldots, L_t\}$ of flats with rank $r(L_i) = \dim(L_i)+1 = v_i+1$ such that every element of $M$ is contained in one of the $L_i$.
$M$ is covered by $V$ if it is covered by some $\vec{v} \in V$.
$C_\mathcal{M}(V)$ is the least $N$ such that, if $M$ is a simple matroid on $N$ elements and every restriction of $M$ is covered by $V$, then $M$ is covered by $V$.

\begin{theorem}\label{thm:general weak bound}
     If $V$ is a set of dimension vectors, each with at most $t$ coordinates, and $k = \max_{\vec{v} \in V} \max_{i} v_i$,
    then
    then
    \[C_\mathcal{M}(V) \leq \sum_{i = 0}^{k+1} t^i.\]
\end{theorem}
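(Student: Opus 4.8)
The plan is to prove \cref{thm:general weak bound} by a purely combinatorial induction that mimics the structure of \cref{prop1}, replacing the algebraic notion of ``a polynomial of bounded degree vanishing on a subset'' with a combinatorial certificate of small size. The key observation is that whenever a matroid $M'$ is covered by a dimension vector $\vec v=(v_1,\dots,v_s)$ with $s\le t$ and each $v_i\le k$, the covering flats can be encoded by a bounded amount of data: each flat of rank at most $k+1$ is the closure of a set of at most $k+1$ elements of the ground set, and there are at most $t$ such flats. So the first step will be to make this encoding precise — to each subset $R\subseteq S$ whose complement is a single point $P_i$, we associate a covering family $\mathcal L_i=\{L_1,\dots,L_s\}$ of flats, and I would like to view the union $\bigcup_i\mathcal L_i$ as living inside a single bounded-size combinatorial universe so that a pigeonhole/dimension-counting argument applies.

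The heart of the argument is to find the right combinatorial analogue of ``$\dim R_{\le t}=\binom{t+k+1}{k+1}$'' that yields the bound $\sum_{i=0}^{k+1}t^i$. I expect the natural route is to count, for a fixed covering family of $t$ flats each of rank $\le k+1$, the number of possible ``covering types'' a point can have, or the size of a spanning/generating structure: a point covered by $V$ lies in one of at most $t$ flats, each flat is determined by a basis of size at most $k+1$, and iterating this branching over ranks $0,1,\dots,k+1$ produces the geometric-series quantity $\sum_{i=0}^{k+1}t^i=1+t+t^2+\dots+t^{k+1}$. Concretely, I would set $N=\sum_{i=0}^{k+1}t^i$ and argue that if $|S|>N$ but every proper restriction is covered by $V$, then among the covering families $\{\mathcal L_i\}$ there must be a ``collapse'': some family $\mathcal L_{i+1}$ is forced to already cover the point $P_{i+1}$ it was allowed to omit, because the chain of spans of the associated generating sets stabilizes once the number of points exceeds the total number of available generators. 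This is exactly the role \cref{prop1} plays in the representable case, and the pigeonhole should be run on a nested chain $Y_1\subseteq Y_1+Y_2\subseteq\cdots$ of combinatorial ``cover data'' rather than vector subspaces.

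The main obstacle, and the reason the bound is quantitatively weaker than \cref{th:fieldUpperBound}, is that in the non-representable setting there is no ambient vector space in which to take sums of subspaces, so the clean inequality $\len(\sum Y_i)-\len Y_1+1$ has no direct counterpart. I therefore expect the crux to be defining the right monotone ``size'' functional on collections of flats that (i) is subadditive or additive enough to support a chain-stabilization argument, and (ii) is bounded above by $\sum_{i=0}^{k+1}t^i$. The likely candidate is to count flats by their generating sets: a flat of rank $r\le k+1$ needs at most $r$ generators, so the cumulative number of generators needed to describe a covering family of $t$ flats of rank $\le k+1$ is controlled by the branching count $1+t+\cdots+t^{k+1}$, and the looseness of bounding ``rank $\le k+1$'' by the full geometric series (rather than a binomial coefficient) is precisely what weakens the bound. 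Once that functional is in place, the induction closes just as in \cref{prop1}: once $|S|$ exceeds the functional's maximum value, a repeated/redundant generator forces one of the omitted points into a covering family, contradicting that $S$ itself is not covered.
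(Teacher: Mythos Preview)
Your plan has a genuine gap: the chain-stabilization argument of \cref{prop1} relies essentially on linearity. In the representable setting, $Y_{i+1}\subseteq Y_1+\cdots+Y_i$ implies that every element of $Y_{i+1}$ is a linear combination of polynomials each vanishing on all but one $P_j$, hence vanishes on $\bigcap_j(S\setminus\{P_j\})\ni P_{i+1}$; the final step (``the covering set $A_{i+1}$ actually contains $P_{i+1}$'') then uses that the product ideal $\Gamma$ is contained in the prime ideal $I(P_{i+1})$, forcing one factor to be. You propose to replace the $Y_i$'s by ``combinatorial cover data'' and to find a size functional bounded by $\sum_{i=0}^{k+1}t^i$, but you never define either the sum operation on such data or the functional, and your candidate (``count generators of the flats'') gives at most $t(k+1)$ for a single family, not the geometric series. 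More seriously, even if some chain of cover-data sets stabilized, there is no analogue of ``linear combination of vanishing polynomials vanishes on the intersection'' or of ``product in a prime implies a factor in the prime'' available for arbitrary matroids, so stabilization would not force $\mathcal L_{i+1}$ to cover $P_{i+1}$. This is exactly why the paper abandons the \cref{prop1} template in the matroid setting.

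The paper's proof takes a completely different and much more elementary route: it truncates to rank $k+2$ and then proves, by induction on the dimension $j$, that every $j$-dimensional flat $\Lambda$ meets $S$ in at most $1+t+\cdots+t^{\,j}$ points. The inductive step is the observation that for any $x\in\Lambda\cap S$, the covering flats $L_1,\dots,L_t$ of $S\setminus\{x\}$ all miss $x$, so each $L_i\cap\Lambda$ has dimension strictly less than $j$; hence $\Lambda\setminus\{x\}$ is contained in $t$ flats of dimension $\le j-1$, and the inductive hypothesis gives $|\Lambda|\le t(1+t+\cdots+t^{\,j-1})+1$. Taking $j=k+1$ yields the bound. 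Your ``branching over ranks $0,1,\dots,k+1$'' intuition is exactly this induction, but it stands on its own and does not feed into a chain/pigeonhole argument---you should run it directly as a bound on $|\Lambda|$ rather than trying to wedge it into the framework of \cref{prop1}.
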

\begin{proof}
    For consistency with the notation of the rest of the paper, we focus on the ``dimension" instead of the rank of flats, which we define to be one less than the rank.
    
    Suppose that $M$ is a matroid such that every proper restriction of $M$ is covered by $V$, but $M$ is not covered by $V$.
    If $\dim(M) > k+1$, then truncate it to $k+1$ dimensions.
    
    We first show that, if $\Lambda$ is a flat of dimension $j$ with $0 \leq j \leq k+1$, then, for each $x \in \Lambda \cap S$, the set $\Lambda \setminus \{x\}$ is contained in the union of $t$ subspaces, each of dimension at most $j-1$.
    Fix $x \in \Lambda$.
    By assumption, there is $\vec{v} \in V$ and a covering set $L_1, \ldots, L_t$ of $M \setminus x$, with $\dim(L_i) = v_i$ for each $L_i$.
    Since $x$ is not contained in $L_1 \cup \ldots \cup L_t$, it is clear that $\dim(L_i \cap \Lambda) < j$ for each $i \in [t]$, and the claim follows.

    With the claim established in the previous paragraph, a simple inductive argument shows that $|\Lambda| \leq t^j + t^{j-1} + \ldots +1$ for each $j$ dimensional plane $\Lambda$.
    For the base case, if $\dim(\Lambda) = 0$ then $|\Lambda| \leq 1$.
    If $\Lambda$ is a $j$-plane with $j > 0$, then, for each $x \in \Lambda$, the set $\Lambda \setminus \{x\}$ is contained in the union of $t$ planes of dimension at most $j-1$.
    Hence, by the inductive hypothesis, $|\Lambda| \leq t(t^{j-1} + \ldots + 1) + 1$, as claimed.
    The conclusion of the theorem is the case $j=k+1$.
\end{proof}

As in the representable case, one particularly interesting case is when $V=\{k,k,\ldots,k\}$.
As defined in \cite{geelen2015projective}, a matroid is {\em $t+1$-thick} if it is not the union of $t$ hyperplanes.
On particularly important case is that of round matroids; a matroid is {\em round} if and only if it is $3$-thick.
The study of round matroids is closely related to the study of higher matroid connectivity \cite[Chapter 8.6]{oxley2011matroid}.

It follows immediately from \cref{thm:general weak bound} that no rank $n+1$ matroid on at least $1 + \sum_{i=0}^{n}t^i$ elements is minimally $(t+1)$-thick, and from \cref{th:fieldUpperBound} that no representable rank $n+1$ matroid on at least $1+\binom{t+n}{n}$ elements is minimally $(t+1)$-thick.
We conjecture that the representable bound holds in general.

\begin{conjecture}\label{conj:strong general bound}
     If $V$ is a set of dimension vectors, each with at most $t$ coordinates, and $k = \max_{\vec{v} \in V} \max_{i} v_i$,
    then
    \[C_\mathcal{M}(V) \leq \binom{t+k+1}{k+1}.\]
\end{conjecture}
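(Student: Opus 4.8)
The plan is to re-run the linear-algebra argument of \cref{prop1} with the polynomial ring replaced by an abstract ``cover space'' attached to the matroid, and, should that construction resist, to prove the binomial bound directly by sharpening the induction behind \cref{thm:general weak bound}. In either route I would first \emph{truncate}: since every dimension vector in $V$ has entries at most $k$, a $V$-cover uses only flats of rank at most $k+1$, and truncating $M$ to rank $k+2$ neither creates nor destroys flats of rank $\le k+1$ nor the containments of ground-set elements in them. Hence $M$ is nearly covered by $V$ if and only if its truncation is, and we may assume $\dim(M)=k+1$. The configuration to keep in mind is the lattice simplex $T_{k+1,t}$ of \cref{ex:triangle}, of size exactly $\binom{t+k+1}{k+1}$; any correct proof must degenerate to it.

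Attempt A (algebraic). The essential data in \cref{th:fieldUpperBound} are an ambient space $R_{\le t}$ of dimension $\binom{t+k+1}{k+1}$ (obtained after the generic projection to $\F^{k+1}$) together with, for each flat $L$ of dimension $\le k$, its vanishing ideal, whose product over the $\le t$ flats of one near-cover is a subspace $Y_i\subseteq R_{\le t}$ with zero locus \emph{exactly} the union of those flats. I would try to manufacture, for an arbitrary rank-$(k+2)$ matroid, a vector space $W$ with $\dim W=\binom{t+k+1}{k+1}$, an evaluation pairing with the ground set, and subspaces $Z_L$ realizing the flats, so that \cref{lem1} persists and ``$Y_i:=\prod_j Z_{L_j}$ covers $S$'' forces $S\subseteq\bigcup_j L_j$; then \cref{prop1} would apply verbatim. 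Candidates worth testing are the graded Möbius algebra and the augmented Chow ring of $M$, whose graded pieces count flags of flats and might supply both the right dimension and a ``product equals union'' incidence law. \textbf{This is where the main obstacle lies}: the number $\binom{t+k+1}{k+1}$ encodes that there are only $k+1$ independent linear coordinates, which is precisely a representation of $M$, so producing such a space for a non-representable matroid is exactly the content that leaves the conjecture open, and I do not expect a purely formal manipulation to bypass it.

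Attempt B (sharpened induction). The weak bound $\sum_i t^i$ comes from the crude step ``a $j$-flat minus a point is covered by $t$ flats of dimension $j-1$''. To reach the binomial one instead wants the Pascal recursion realized by the simplex: writing $f(d,t)$ for the threshold in dimension $d$ with budget $t$, find a hyperplane $H$ (a rank-$(k+1)$ flat) such that $M|H$ is nearly covered by $t$ hyperplanes within $H$, bounding $|H\cap S|$ by $f(k,t)$, while the complement $S\setminus H$ is nearly covered by only $t-1$ hyperplanes, bounded by $f(k+1,t-1)$, giving
\[ f(k+1,t)\ \le\ f(k,t) + f(k+1,t-1), \]
which, against the base cases in dimension $0$ and budget $0$, solves to $\binom{t+k+1}{k+1}$. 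I would first settle the pure hyperplane case $V=\{(k,\dots,k)\}$, the matroid analog of \cref{th:introHyperplaneCover}, taking $H$ to meet $S$ in the maximum number of elements, and then reduce general $V$ to this case together with the auxiliary situation of a set covered by $t$ dimension-$k$ flats but not by $V$. \textbf{The budget-decrease step is the crux and the likely point of failure}: it requires that deleting the points of a single well-chosen hyperplane lets every remaining deletion be covered by one fewer flat, yet for abstract matroids flats intersect far more richly than the coordinate and diagonal hyperplanes of $T_{k+1,t}$, so there may be no hyperplane whose removal cleanly frees a unit of covering budget — which is consistent with the problem having resisted proof.
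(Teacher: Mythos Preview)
The statement you are attempting is labeled a \emph{Conjecture} in the paper, and the paper offers no proof of it; indeed the surrounding text explicitly flags it as open, remarking only that extremal matroids ought to be highly structured and hence likely representable. So there is no ``paper's own proof'' against which to compare your proposal.

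Your write-up is not a proof either, and you are candid about this: both Attempt~A and Attempt~B end by naming the obstruction and stating that you expect it to fail. That honest assessment is consistent with the status of the problem. In Attempt~A you correctly identify that manufacturing a $\binom{t+k+1}{k+1}$-dimensional ``cover space'' with the right zero-locus behaviour for a non-representable matroid is tantamount to producing a representation, which is exactly why the algebraic argument of \cref{th:fieldUpperBound} does not transfer. In Attempt~B the Pascal recursion $f(k+1,t)\le f(k,t)+f(k+1,t-1)$ is the right target, but the step ``deleting a well-chosen hyperplane frees one unit of covering budget on the complement'' is unproved and, as you note, is precisely the missing idea.

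In short: there is no gap to name beyond the ones you have already named yourself, and no paper proof to compare to. If this is meant to be submitted as a proof, it is not one; if it is meant as a research plan documenting why the conjecture is hard, it is accurate and aligns with the paper's own discussion.
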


Although the vast majority of matroids are non-representable \cite{knuth1974asymptotic, nelson2018almost}, any extremal example for \cref{conj:strong general bound} should be a highly structured matroid with many small circuits, and such matroids tend to be representable.

It is also interesting to consider the structure of extremal examples for \cref{th:fieldUpperBound}.
The sets described in \cref{ex:triangle} are not the unique extremal examples, for example see \cref{fig:tightExample}.
However, the example shown in \cref{fig:tightExample} can be obtained from that described in \cref{ex:triangle} by adding one circuit on three points (such an operation is called a {\em tightening}).

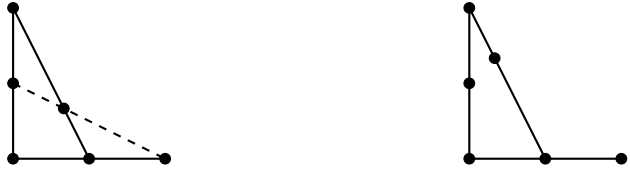
\begin{figure}[h!]
\begin{center}
\begin{tikzpicture}[scale=1]

\filldraw[black] (0,0) circle (2pt);
\filldraw[black] (2,0) circle (2pt);
\filldraw[black] (0,2) circle (2pt);
\filldraw[black] (1,0) circle (2pt);
\filldraw[black] (0,1) circle (2pt);
\filldraw[black] (2/3,2/3) circle (2pt);

\draw[black, thick] (0,0)--(2,0);
\draw[black, thick] (0,0)--(0,2);
\draw[black,thick] (0,2)--(1,0);
\draw[black,thick,dashed] (0,1)--(2,0);

\filldraw[black] (6,0) circle (2pt);
\filldraw[black] (8,0) circle (2pt);
\filldraw[black] (6,2) circle (2pt);
\filldraw[black] (7,0) circle (2pt);
\filldraw[black] (6,1) circle (2pt);
\filldraw[black] (19/3,4/3) circle (2pt);

\draw[black, thick] (6,0)--(8,0);
\draw[black, thick] (6,0)--(6,2);
\draw[black,thick] (6,2)--(7,0);

\end{tikzpicture}
\end{center}
\caption{Two sets of $6$ points with different underlying matroids that are each nearly covered by $2$ lines.
The set of points on the right has the same underlying matroid as that described in \cref{ex:triangle}, and is obtained from the left set by removing the circuit denoted by the dashed line.}\label{fig:tightExample}
\end{figure}

The following precise conjecture was proposed by Rutger Campbell.

\begin{conjecture}\label{conj:structure}
    Every minimal $(t+1)$-thick matroid is a tightening of that described in \cref{ex:triangle}.
\end{conjecture}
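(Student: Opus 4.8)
The plan is to recast the conjecture in the language of rank-preserving weak maps and then induct on $n+t$, mirroring the recursive structure of $T_{n,t}$ exhibited in \cref{ex:missingTriangle}. A \emph{tightening} of a matroid $N$ is a matroid $M$ on the same ground set obtained by adding circuits without dropping the rank; this is precisely the assertion that the identity is a rank-preserving weak map $N \to M$, equivalently that every independent set of $M$ is independent in $N$, equivalently that every circuit of $N$ is dependent in $M$. So the goal is, for an extremal minimal $(t+1)$-thick matroid $M$ of rank $n+1$, to produce a bijection $\phi \colon E(T_{n,t}) \to E(M)$ under which every circuit of $T_{n,t}$ maps to a dependent set of $M$. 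Before anything else one must restrict to \emph{extremal} examples, those with the maximal $\binom{t+n}{n}$ elements: a tightening preserves the number of elements, while smaller minimal $(t+1)$-thick matroids exist—already $U_{3,5}$ is minimal $3$-thick but has only $5 < 6 = \binom{4}{2}$ elements—so the conjecture can hold only in the extremal regime. In particular, establishing \cref{conj:strong general bound} (the sharp size bound) is a prerequisite, since the argument needs $|E(M)| = \binom{t+n}{n}$ from the start.

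The inductive engine is the decomposition of $T_{n,t}$ along a facet hyperplane. The coordinate hyperplane $H_0 = \{x_1 = 0\}$ meets $T_{n,t}$ in a copy of $T_{n-1,t}$ (an extremal minimal $(t+1)$-thick matroid of rank $n$), while the complement $T_{n,t}\setminus H_0$ is a copy of $T_{n,t-1}$ (an extremal minimal $t$-thick matroid of rank $n+1$), and $\binom{t+n-1}{n-1} + \binom{t+n-1}{n} = \binom{t+n}{n}$ accounts for every element. The base cases $t=1$ (the free matroid on $n+1$ elements) and $n=1$ (which forces $U_{2,t+1}$) are unique and are trivially tightenings of themselves. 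For the inductive step I would first locate in an arbitrary extremal $M$ a \emph{rich} hyperplane flat $H$ whose restriction $M|H$ is itself an extremal minimal $(t+1)$-thick matroid of rank $n$ on $\binom{t+n-1}{n-1}$ elements, and then show that the complementary restriction $M\setminus H$ is an extremal minimal $t$-thick matroid of rank $n+1$. Applying the inductive hypothesis to each piece yields weak maps $T_{n-1,t}\to M|H$ and $T_{n,t-1}\to M\setminus H$.

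The remaining step is to glue these two weak maps into a single bijection $\phi$ and to verify that it sends every circuit of $T_{n,t}$ to a dependent set of $M$. Circuits lying inside $H_0$ or inside its complement are handled by the two inductive weak maps, so the content is in the circuits that cross $H_0$. These crossing circuits are controlled by the covering hyperplanes: for each deleted point $e$ the set $M\setminus e$ is covered by $t$ hyperplanes, and in the extremal case these coverings should be forced to refine the facet families $\{x_i = c\}$ and $\{\sum_j x_j = c\}$ of $T_{n,t}$. The idea is to read off from these rigid covering families exactly which triples and larger sets must be dependent, matching every crossing circuit of $T_{n,t}$ with a dependency of $M$ and completing the gluing.

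The main obstacle is the existence and rigidity of the rich hyperplane $H$, which is essentially the equality case of the covering bound. In the representable case one can attack this through the equality case of \cref{prop1}: when $r = \dim\sum_i Y_i - \dim Y_1 + 1$ holds with equality, the chain $Y_1 \subseteq Y_1+Y_2 \subseteq \cdots$ must strictly increase by exactly one at every step, and unwinding this flag of spaces of degree-$\le t$ polynomials should pin down the geometry of $M$ and expose the facet hyperplane. For non-representable matroids no such algebra is available, the sharp bound of \cref{conj:strong general bound} is itself open, and the combinatorial bound of \cref{thm:general weak bound} is far too weak to force extremal structure; extracting the rich hyperplane purely combinatorially—presumably through a connectivity or flat-counting argument tailored to round matroids—is where I expect the real difficulty to lie.
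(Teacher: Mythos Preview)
The statement you are attempting is labeled as a \emph{conjecture} in the paper; the paper offers no proof and explicitly says ``it would be interesting to prove \cref{conj:structure} even in the special case of representable matroids.'' There is therefore no paper proof to compare against.

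Your proposal is not a proof but a strategy, and you are candid about its gaps. Two remarks.

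First, your observation that the conjecture is false as literally stated is correct and worth recording: $U_{3,5}$ is minimally $3$-thick with $5$ elements, while any tightening of $T_{2,2}$ has $6$ elements. The conjecture must be read as concerning only \emph{extremal} (maximum-size) minimal $(t+1)$-thick matroids, and even that reading presupposes \cref{conj:strong general bound}, which is itself open.

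Second, the substantive content of your plan rests on the existence of a ``rich hyperplane'' $H \subset M$ with $|H| = \binom{t+n-1}{n-1}$ such that both $M|H$ and $M \setminus H$ are again extremal of the appropriate type. You give no argument for this, and it is the heart of the matter: there is no a priori reason an extremal minimally $(t+1)$-thick matroid should contain any hyperplane of exactly that size, and your suggestion to extract it in the representable case from the equality analysis of \cref{prop1} (tracking a flag of degree-$\le t$ polynomial spaces) is a plausible heuristic, not an argument. The subsequent gluing step, which asks that crossing circuits of $T_{n,t}$ map to dependent sets of $M$, likewise depends on structural information about the covering families of $M$ that you have not established.

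In short: the paper does not prove this, you do not prove this, and the outline---while natural---defers the real difficulty to steps that are at least as hard as the conjecture itself.
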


It would be interesting to prove \cref{conj:structure} even in the special case of representable matroids.

\section{Nearly covered sets in $(\Z/p^k\Z)^n$}\label{sec:padic-full}


\subsection{Basic $p$-adic geometry}\label{sec:p-adic}


Let $p$ be a prime number, and let $k\in\N$. We define $\Zpk:=\ZZ/p^k\ZZ$, the ring of integers modulo $p^k$, and use $\Zpk^\times$ to denote the multiplicative group of invertible elements of $\Zpk$. We will work in $\Zpk^n$ with coordinates $x=(x_1,x_2,\dots,x_n)$, where $x_j\in\Zpk$ for each $j$. We will also write $\Zpk_\ell =\Z/p^\ell\Z$ for $1\leq \ell\leq k$, so that $\Zpk_k=\Zpk$ and $\Zpk_1=\Z/p\Z$.

We use the notation $|S|$ to denote the cardinality of a set $S$, and the notation $p^j\parallel a$ to mean $p^j\mid a$ but $p^{j+1}\nmid a$. If $x=(x_1,\dots,x_n)\in\Zpk^n$, we will write $p^j\parallel x$ if $p^j|x_i$ for each $i\in\{1,\dots,n\}$ and $p^{j+1}\nmid x_i$ for at least one $i$. The {\em p-adic distance} between two distinct points $x,x'\in \Zpk^n$ is $|x-x'|_p=p^{-\ell}$, where $p^\ell\parallel x-x'$. By convention, we will write $|x-x|_p=p^{-k}$, so that the results below will not require a separate statement in this case.

A {\em direction} in $\Zpk^n$ is an element of the projective space $\PZpk$, defined as follows. Let $\mathbb{S}^{n-1}(\Zpk)$ be the set of all elements of $\Zpk$ that have at least one invertible component, and let
$$
\PP \Zpk^{n-1}= \mathbb{S}^{n-1}(\Zpk)/\Zpk^\times
$$
We will identify a direction $b\in\PZpk$ with a vector $b=(b_1,\dots,b_n)\in\Zpk^n$ such that $b_j\in\Zpk^\times$ for at least one $j$, with the convention that two such vectors $b,b'$ represent the same direction if $b=\lambda b'$ for some $\lambda\in\Zpk^\times$. In particular, when $n=2$, any direction $b\in\PP \Zpk$ may be represented as either $(1,u)$ or $(pu,1)$ with $u\in\Zpk$.

For $b,b'\in \PP \Zpk^{n-1}$, we define the \textit{$p$-adic angle} between $b$ and $b'$ to be $\angle(b,b')=\min_{r\in \Zpk^\times} |b-rb'|_p$. Thus the angle between $b=(1,u)$ and $b'=(pu',1)$ in $\Zpk^2$ is $1$, and the angle between $b=(1,u)$ and $b''=(1,u'')$ is $|u-u''|_p$.

A {\em line} in a direction $b\in \PP \Zpk^{n-1}$ is a set of the form
$$
L_b(a) =\{a + s b:\ s\in \Zpk\} \hbox{ for some }a\in \Zpk^n.
$$
Note that $L_b(a) $ has $|\Zpk|=p^k$ distinct elements. 
The angle between lines $L$ and $L'$, with direction vectors  $b$ and $b'$ respectively, is
$\angle(L,L')=\angle(b,b')$. By a slight abuse of terminology, we will say that the {\it $p$-adic slope} of a line $L_b(a)$ in $\Zpk^2$ is the angle between $b$ and $(0,1)$.

For $0\leq \ell\leq k$, let $\pi_{\ell}:\Zpk^n\rightarrow \Zpk_\ell^n$ be the projection 
\[ \pi_{\ell}(x)=x\bmod{p^\ell}.\]
A \textit{cube on scale $\ell$}, or a {\it $p^{-\ell}$-cube}, is a set of the form 
\[ Q=Q_\ell(x)=\{y\in \Zpk^n:\ |x-y|_p \leq p^{-\ell} \}\subset \Zpk^n \]
for some $x\in \Zpk^2$. Note that a $1$-cube is the entire $\Zpk^n$, and a $p^{-k}$-cube is a single point.

A cube $Q$ on scale $\ell$ may be rescaled to $\Zpk_{k-\ell}^n$ as follows. 
We represent points $x\in Q$ as $x=x'+p^\ell x''$ with $x'\in\{0,1,\dots,p^\ell-1\}^n$ and $x''\in\{0,1,\dots,p^{k-\ell}-1\}^n$. If $x,y$ belong to the same $Q$, then (with the obvious notation) we have $y'=x'$. Therefore the map $\iota_Q:Q\rightarrow R_{k-\ell}^n$ defined by
\[ \iota_Q(x'+p^\ell x'')=x''\]
provides the desired rescaling. 

A distinguishing feature of $p$-adic geometry is that two lines may intersect in more than one point. We describe such intersections in the next two lemmas.

\begin{lemma}
\label{LLangle}  
Let $L,L'\subset \Zpk^n$ be two lines with direction vectors $b,b'$. Assume that $\{a,a'\}\subset L\cap L'$, where $a,a'\in\Zpk^n$ satisfy $|a-a'|_p=p^{-\ell}$ for some $0\leq \ell\leq k-1$. Then $\angle(b,b')\leq p^{k-\ell}$.
\end{lemma}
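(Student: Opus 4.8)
The plan is to use the two common points to force the direction vectors $b$ and $b'$ to agree up to a unit scalar modulo a large power of $p$. Since $a,a'\in L$, writing $L=L_b(a_0)$ gives $a-a'=\sigma b$ for a unique $\sigma\in\Zpk$ (unique because some coordinate $b_j$ is a unit), and likewise $a,a'\in L'$ gives $a-a'=\tau b'$ for some $\tau\in\Zpk$. First I would compute the relevant valuations: because $b$ has a unit coordinate, $|\sigma b|_p=|\sigma|_p$, and similarly $|\tau b'|_p=|\tau|_p$. The hypothesis $|a-a'|_p=p^{-\ell}$ then yields $p^\ell\parallel\sigma$ and $p^\ell\parallel\tau$, so that we may write $\sigma=p^\ell u$ and $\tau=p^\ell u'$ with $u,u'$ coprime to $p$.

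Next I would combine the two expressions for $a-a'$. The equality $\sigma b=\tau b'$ reads $p^\ell u b=p^\ell u' b'$ in $\Zpk^n$, i.e. $p^\ell(ub-u'b')\equiv 0\pmod{p^k}$ in each coordinate, which is equivalent to $ub\equiv u'b'\pmod{p^{k-\ell}}$. Since $u$ is coprime to $p$ it is invertible modulo $p^{k-\ell}$, and I would lift $u^{-1}u'$ to a genuine unit $r\in\Zpk^\times$ (possible, as $u^{-1}u'$ is coprime to $p$). Then $b-rb'\equiv 0\pmod{p^{k-\ell}}$, so $p^{k-\ell}$ divides every coordinate of $b-rb'$ and hence $|b-rb'|_p\le p^{-(k-\ell)}$. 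As $r$ is admissible in the minimum defining the angle, this gives $\angle(b,b')\le p^{-(k-\ell)}$, the desired bound (in particular comfortably within the stated estimate).

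I do not expect a serious obstacle here; the argument is essentially a valuation computation. The only points requiring care are bookkeeping the unit parts across the two different moduli $p^k$ and $p^{k-\ell}$ and, crucially, checking that the scalar $r$ produced can be taken to be an element of $\Zpk^\times$, so that it is eligible in $\min_{r\in\Zpk^\times}|b-rb'|_p$. This is precisely where the convention that a direction vector must have a unit coordinate does the essential work: it makes $\sigma$ and $\tau$ well-defined, and it is what converts the $p$-adic norm of $a-a'$ into the valuation of the scalars $\sigma,\tau$.
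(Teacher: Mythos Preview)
Your argument is correct and is essentially the same as the paper's: both parametrize $a'-a$ as $sb=s'b'$, read off $p^\ell\parallel s,s'$ from the unit-coordinate convention on directions, and then produce a unit $r$ with $p^{k-\ell}\mid b-rb'$. The only cosmetic difference is that the paper chooses $r\in\Zpk^\times$ directly via $s'=sr$ and cancels $s$ in $\Zpk$, whereas you reduce modulo $p^{k-\ell}$ and then lift $u^{-1}u'$ back to $\Zpk^\times$; you also correctly note that the actual bound obtained is $\angle(b,b')\le p^{-(k-\ell)}$, which is what the paper uses downstream.
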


\begin{proof}
Let $L=L_b(a)$ and $L'=L_{b'}(a)$. Then $a'=a+sb=a+s'b'$ for some $s,s'\in\Zpk$. Since $p^\ell\parallel a-a'=sb=s'b'$ and $p$ does not divide either $b$ or $b'$, we must have $p^\ell\parallel s$ and $p^\ell\parallel s'$. Hence there is an element $r\in \Zpk^\times$ such that $s'=sr$. Let $b''=rb'$, then $b''$ represents the same direction as $b'$, and $a'=a+s'b'=a+sb''$. Hence $sb=sb''$ in $\Zpk$, so that $p^k\mid s(b-b'')$. Since $p^\ell\parallel s$, we must have $p^{k-\ell}|b-b''$, proving the claim.
\end{proof}

\begin{lemma}
\label{Q-properties}  
Let $Q$ be a cube on scale $k-\ell$ for some $1\leq \ell\leq k$. Then:
\begin{enumerate}
\item[(i)] If $L\subset \Zpk^n$ is a line in the direction $b$ intersecting $Q$, then $\iota_Q(Q\cap L)$ is a line in the direction $\pi_{\ell}(b)$ in $\Zpk_{\ell}^n$.
\item[(ii)] Let $L,L'\subset \Zpk^n$ be lines in the directions $b,b'$ respectively. Assume that $\angle(b,b')=p^{-\ell}$, and that the set $L\cap L'\cap Q$ is nonempty. Then $L\cap L'=L\cap Q=L'\cap Q$, and in particular, $|L\cap L'|=p^\ell$.

\end{enumerate}
\end{lemma}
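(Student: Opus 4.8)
For part (i) I would simply parametrize the intersection. Fixing a point $a_0\in Q\cap L$ and writing $L=L_b(a_0)$, a general point of $L$ is $a_0+sb$ with $s\in\Zpk$; since $b$ has a unit coordinate, the membership condition $a_0+sb\in Q$ (that $p^{k-\ell}$ divide every coordinate of $sb$) reduces to $p^{k-\ell}\mid s$. Hence $Q\cap L=\{a_0+p^{k-\ell}ub:\ u\in\Zpk_{\ell}\}$ has $p^\ell$ points. Decomposing $a_0=x'+p^{k-\ell}x_0''$ as in the definition of $\iota_Q$, I would then compute $\iota_Q(a_0+p^{k-\ell}ub)=(x_0''+u\,\pi_\ell(b))\bmod p^\ell$, so that $\iota_Q(Q\cap L)=\{x_0''+u\,\pi_\ell(b):\ u\in\Zpk_\ell\}$; as $b$ has a unit coordinate so does $\pi_\ell(b)$, and this set is exactly the line $L_{\pi_\ell(b)}(x_0'')$, proving (i).

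For part (ii) the plan is to normalize the directions and then prove $L\cap L'=L\cap Q=L'\cap Q$ by two inclusions. Since $\angle(b,b')=p^{-\ell}$, after rescaling $b'$ by a unit I may assume $b-b'=p^\ell w$ with $p\nmid w$; I would then pick a common point $c\in L\cap L'\cap Q$ and write $L=L_b(c)$, $L'=L_{b'}(c)$. Exactly as in (i), $L\cap Q=\{c+sb:\ p^{k-\ell}\mid s\}$ and $L'\cap Q=\{c+s'b':\ p^{k-\ell}\mid s'\}$, each of size $p^\ell$. The easy inclusion is then immediate: if $p^{k-\ell}\mid s$ then $sp^\ell w=0$ in $\Zpk$, so $c+sb=c+sb'+sp^\ell w=c+sb'\in L'$; thus $L\cap Q\subseteq L'$ and, symmetrically, $L'\cap Q\subseteq L$, giving $L\cap Q=L'\cap Q\subseteq L\cap L'$.

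The crux, and the step I expect to be the main obstacle, is the reverse inclusion $L\cap L'\subseteq Q$, which is where the hypothesis that the angle equals $p^{-\ell}$ \emph{exactly} must be used. The key sub-claim is that $\bar b',\bar w\in\F_p^n$ are linearly independent: if instead $\bar w=\bar\lambda\,\bar b'$, then $w=\lambda b'+pw''$ and $b-(1+p^\ell\lambda)b'=p^{\ell+1}w''$, so the unit multiple $(1+p^\ell\lambda)b'$ would align with $b$ to order $p^{\ell+1}$, contradicting $\angle(b,b')=p^{-\ell}$. Granting independence, some minor $w_ib_j'-w_jb_i'$ is a unit; then for $c+sb=c+s'b'\in L\cap L'$ the relation $sp^\ell w=(s'-s)b'$ gives, after cross-multiplying the $i$- and $j$-components, $sp^\ell(w_ib_j'-w_jb_i')=0$, whence $sp^\ell=0$ and $p^{k-\ell}\mid s$, so $c+sb\in Q$. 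Combining the inclusions yields $L\cap L'=L\cap Q=L'\cap Q$ of size $p^\ell$. I would finally note the degenerate case $\ell=k$ (where $b=b'$, $L=L'$ and $Q=\Zpk^n$) separately, though it is immediate.
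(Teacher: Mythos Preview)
Your argument is correct. Part (i) is essentially the same parametrization the paper gives. For part (ii), however, you take a genuinely different route. The paper derives $L\cap Q=L'\cap Q$ as an immediate consequence of part (i): since $\angle(b,b')=p^{-\ell}$ forces $\pi_\ell(b)$ and $\pi_\ell(b')$ to represent the same direction in $\Zpk_\ell^n$, the images $\iota_Q(L\cap Q)$ and $\iota_Q(L'\cap Q)$ are the same line through $\iota_Q(c)$, hence $L\cap Q=L'\cap Q\subseteq L\cap L'$. For the reverse inclusion $L\cap L'\subseteq Q$, the paper simply invokes Lemma~\ref{LLangle}: a common point outside $Q$ would lie at $p$-adic distance greater than $p^{-(k-\ell)}$ from $c$, which by that lemma would force $\angle(b,b')<p^{-\ell}$, a contradiction. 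Your proof instead bypasses both auxiliary results and argues directly, the key step being that the \emph{exact} angle hypothesis makes $\bar b'$ and $\bar w$ linearly independent over $\F_p$, whence some $2\times2$ minor is a unit and the relation $sp^\ell w=(s'-s)b'$ collapses to $p^{k-\ell}\mid s$. The paper's route is shorter given the machinery already in place; yours is self-contained and makes the role of the exact (as opposed to upper-bound) angle hypothesis more explicit.
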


\begin{proof}
For (i), there is nothing to prove when $\ell=k$. Assume now that $\ell<k$, and let $L=L_b(a)$ for some direction $b$ and some $a\in Q$. 
Let $a=a'+p^{\ell}a''$ with $a'\in\{0,1,\dots,p^{\ell}-1\}$. Then 
\[ Q\cap L =\{a+ p^{k-\ell}t b :\ t\in \Zpk \}, \]
so that
\[ \iota_Q(Q\cap L)=\{a''+t \pi_{\ell}(b):\ t\in \Zpk_{\ell}\}\subset \Zpk_{\ell}^n.\]

We now prove (ii). Let $L,L'$ be as in (ii), and let $a\in L\cap L'\cap Q$. Since $\angle(b,b')=p^{-\ell}$, we have $\pi_\ell(b)=\pi_\ell(b')$. By (i), the lines $\iota_Q(Q\cap L)$ and $\iota_Q(Q\cap L')$ are the same in $\Zpk_{\ell}^n$,
hence 
$$L\cap Q=L'\cap Q\subset L\cap L'.$$
For the converse inclusion, suppose we had $a'\in (L\cap L')\setminus Q$. Then $|a-a'|_p> p^{-(k-\ell)}$, and by Lemma \ref{LLangle} we must have $\angle(L,L')<p^{-\ell}$, contradicting the assumptions of (ii).
\end{proof}

\begin{lemma}\label{lemma-ratio}
Let $n=2$. Suppose that the line $L=L_b(a)$ passes through a point $a'$ such that $|a_1-a'_1|_p=p^{-j}$ and $|a_2-a'_2|_p=p^{-\ell}$, where $\ell>j$. Then $L$ makes angle at most $p^{-\ell+j}$ with $(1,0)$.
\end{lemma}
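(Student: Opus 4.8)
The plan is to turn the incidence $a' \in L$ into the vector relation $a' = a + sb$ for some $s \in \Zpk$, and then recover the $p$-adic valuation of each component of $b$ from the valuations of the two coordinate differences. For $x \in \Zpk$ write $v(x)$ for the integer with $p^{v(x)} \parallel x$ (so $v(0) = k$ and $|x|_p = p^{-v(x)}$); the one arithmetic fact I will use is that $v(xy) = v(x) + v(y)$ as long as the right-hand side is below the cutoff $k$, while in general $v(xy) = \min(v(x)+v(y),\, k)$.

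First I would record the two scalar equations. Since $a' \in L_b(a)$, there is $s \in \Zpk$ with $a'_1 - a_1 = sb_1$ and $a'_2 - a_2 = sb_2$, and the hypotheses read $v(sb_1) = j$ and $v(sb_2) = \ell$. Because $\ell > j$ we have $j < k$, so the first equation is below the cutoff and gives the exact relation $v(s) + v(b_1) = j$; the second gives only $v(s) + v(b_2) \geq \ell$ (this weaker form is exactly what I need in order to absorb the convention $|x-x|_p = p^{-k}$ in the boundary case $\ell = k$). Subtracting, I obtain $v(b_2) - v(b_1) \geq \ell - j > 0$, so $v(b_1) < v(b_2)$.

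Now the direction property finishes it. As $b$ is a direction, one of its components is a unit, i.e. $\min(v(b_1), v(b_2)) = 0$; together with $v(b_1) < v(b_2)$ this forces $v(b_1) = 0$. Hence $b_1 \in \Zpk^\times$, and I may normalize $b$ to the form $(1, u)$ with $u = b_2 b_1^{-1}$, so that $v(u) = v(b_2) \geq \ell - j$. Using the recorded formula that the angle between $(1,u)$ and $(1,0)$ equals $|u|_p$, I conclude $\angle(b, (1,0)) = |u|_p = p^{-v(u)} \leq p^{-(\ell - j)} = p^{-\ell+j}$, as claimed; when $\ell < k$ both valuation inequalities are equalities and the angle is exactly $p^{-\ell+j}$. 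There is no serious obstacle here: the only point demanding care is the valuation bookkeeping in $\Zpk$, where additivity of valuations holds only below the cutoff $k$, which is precisely why the hypothesis $\ell > j$ (forcing $j < k$) is invoked to make the first relation exact.
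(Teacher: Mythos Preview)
Your proof is correct and follows essentially the same route as the paper: write $a' - a = sb$, use the direction property to identify which coordinate of $b$ is a unit, and then read off the $p$-divisibility of the other coordinate. The paper eliminates the case $b_2 = 1$ by contradiction (it would force $p^\ell \mid a'_1 - a_1$), whereas you directly compare $v(b_1)$ and $v(b_2)$ and invoke $\min(v(b_1),v(b_2))=0$; this is a stylistic difference only, and your explicit handling of the cutoff at $k$ (so that $v(sb_2)=\ell$ yields only $v(s)+v(b_2)\geq\ell$) is a nice clarification of a point the paper leaves implicit.
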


\begin{proof}
We may assume that $b=(b_1,b_2)$ with one of $b_1,b_2$ equal to 1. We have $a'=a+tb$ for some $t\in\Zpk$, so that
$$
a'_1-a_1=tb_1,\ \ a'_2-a_2=tb_2.
$$
If we had $b_2=1$, it would follow that $p^\ell|(a'_2-a_2)=t$. But then $p^\ell$ would also divide $tb_1=a'_1-a_1$, a contradiction. Therefore $b_1=1$. It follows that $p^j\parallel t$, so that $p^{\ell-j}\mid b_2$, proving the lemma.
\end{proof}


\subsection{Large nearly covered sets in $(\Z/p^k\Z)^2$}\label{sec:fullExample}

We continue to use the notation of Section \ref{sec:p-adic}, with $n=2$. We also let $t\in\N$.
When $k\geq 2$ and $p$ is sufficiently large relative to $t$, Example \ref{ex:triangle} can be improved by taking advantage of the multiple scales available in $\Zpk$. 
Our result is as follows.

\begin{theorem}\label{th:p-adicConstructionMain}
Let $2\leq t < \frac{\sqrt{p}}{4}$ and $k\geq 2$. 
Define the parameters $\ell$ and $M$ as follows:
\begin{itemize}
    \item If $k=2$, let $M=\ell=1$.
    \item If $k\geq 3$, let $\ell=\lfloor \log_p k\rfloor+2$ and
    $M=\lfloor (k-1)/\ell\rfloor$.   
\end{itemize}
Let also $t'=t+M-1$.
Then there exists a set $S$ in $(\Z/p^k\Z)^2$ of size at least 
    \begin{equation}\label{S-size-padic}
    |S|=  2^M \binom{t+1}{2}+2^M-1
\end{equation}
such that $S$ cannot be covered by $t'$ lines but $S\setminus \{x\}$ for any $x\in S$ can be covered by $t'$ lines. 

\end{theorem}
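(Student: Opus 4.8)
The plan is to build the set $S$ in Theorem \ref{th:p-adicConstructionMain} by iterating a single-scale "gadget" construction across the $M$ available scales, exploiting the fact that in $\Zpk^n$ two lines at small angle intersect in many points (Lemma \ref{Q-properties}(ii)). First I would construct the base gadget at a single scale: inside a $p^{-\ell}$-cube, rescaled via $\iota_Q$ to $\Zpk_\ell^2$, I would place a configuration analogous to Example \ref{ex:triangle} of size roughly $\binom{t+1}{2}+1$ that is nearly covered by $t$ lines in the slope-graded sense. The constraint $t < \tfrac14\sqrt p$ should guarantee enough distinct slopes (directions in $\PZpk$) are available so that no line of a small covering accidentally absorbs more points than intended, and the choice $\ell = \lfloor \log_p k\rfloor + 2$ ensures each scale is "resolved" finely enough that the angle/distance bookkeeping of Lemma \ref{LLangle} and Lemma \ref{lemma-ratio} separates the scales cleanly.

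The core of the argument is the iteration. I would take $M = \lfloor (k-1)/\ell\rfloor$ nested scales $\ell, 2\ell, \ldots, M\ell$, all below $k$, and at each scale place a scaled copy of the gadget, with a binary branching that doubles the point count at each level — this is the source of the $2^M$ factor in \eqref{S-size-padic}. Concretely, within each cube from the previous level I would use $\iota_Q$ to drop down to the next scale and insert two translated copies of the gadget; because $\iota_Q$ sends lines to lines in the direction $\pi_\ell(b)$ (Lemma \ref{Q-properties}(i)), a line covering points at the coarse scale induces a covering line at the fine scale, and conversely a single coarse line can only "see" one of the two branches at the cost of one extra covering line. This accounting is what produces $t' = t + M - 1$: each of the $M-1$ descents below the top level costs exactly one additional line.

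The two properties to verify are: (a) $S$ is \emph{not} covered by $t'$ lines, and (b) for every $x \in S$, the set $S \setminus \{x\}$ \emph{is} covered by $t'$ lines. For (a) I would argue by a scale-by-scale descent: any set of $t'$ lines, when projected to the coarsest scale, must cover the top gadget, and by the extremality of the single-scale gadget this forces a specific line to be used; rescaling into its cube via $\iota_Q$ reduces to covering the next gadget with $t'-1$ lines, and the induction on $M$ reaches a contradiction at the bottom scale where $t$ lines cannot cover $\binom{t+1}{2}+1$ generically-positioned points. For (b), given the missing point $x$, I would locate the unique cube at each scale containing $x$, use the single-scale "remove one point" covering from Example \ref{ex:triangle} inside that cube (costing $t$ lines at the bottom), and cover each sibling branch along the path from $x$ to the root with one line apiece (total $M-1$ extra lines), for $t' = t+M-1$ lines overall. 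The main obstacle I anticipate is (a): ensuring that the descent is forced, i.e. that $t'$ lines truly cannot "cheat" by using lines of intermediate slope that cut across several scales simultaneously. Controlling this requires careful use of the angle bounds in Lemma \ref{LLangle} together with the slope-gap guaranteed by $t<\tfrac14\sqrt p$, and getting the scale separation $\ell=\lfloor\log_p k\rfloor+2$ to exactly rule out such cross-scale lines is where the quantitative work concentrates.
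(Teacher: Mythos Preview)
Your high-level picture (iterate across $M$ scales, double at each step, pay one extra line per level) matches the paper, but the construction you sketch is the wrong one, and your argument for (a) does not go through. The paper does \emph{not} nest gadgets inside cubes and descend via $\iota_Q$. Instead it starts from a single \emph{flattened} copy of the triangle $K_0=F(T)$ living in a very thin horizontal strip (so that any line through two of its points has slope at most $p^{-k+1}$), and at step $m$ it forms $K'_m=K_{m-1}\cup\bigl(K_{m-1}+(p^{m\ell},0)\bigr)$ and then adds a single high point $\mathbf a_m=(mp,\,p^{k-m\ell-1})$. The whole mechanism hinges on two slope facts: (i) any line through two points of the flat set $K'_m$ has slope $\le p^{-k+m\ell}$ and therefore automatically passes through \emph{both} translates of any point it hits (this is why a covering of $K_{m-1}\setminus\{x\}$ by low-slope lines covers the companion copy for free, with one more line handling $\mathbf a_m$ and the orphaned companion of $x$); and (ii) any line through $\mathbf a_m$ and two points of $K'_m$ would have to be low-slope and hence stay inside the thin strip, which $\mathbf a_m$ is not in. Thus the line through $\mathbf a_m$ is forced to be ``wasted'' on at most one point of $K'_m$, so one full translate of $K_{m-1}$ survives untouched and the induction on $m$ runs.

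Your projection/descent argument for (a) does not produce this forcing. Projecting $t'$ lines to a coarse scale still gives $t'$ lines, and at the top of your tree you have only two sub-cubes, not an extremal gadget, so nothing pins down a specific line; lines of intermediate slope can genuinely spread across both branches and your ``cross-scale cheating'' worry is exactly what happens. What you are missing is the extra point $\mathbf a_m$ at each level, placed at a carefully chosen height so that no low-slope line can reach it---that is the device that forces one line to be burned per level. Likewise in (b), the sibling branch is \emph{not} covered by a separate line; it is covered automatically because the $t+m-2$ low-slope lines for $K_{m-1}\setminus\{x\}$ already pass through its $(p^{m\ell},0)$-translate, and the $(t+m-1)$-th line goes through $\mathbf a_m$ and the partner of the deleted point.
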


The set constructed in Theorem \ref{th:p-adicConstructionMain} has cardinality strictly larger than $\binom{t'+2}{2}$ (the cardinality of the set in Example (\ref{ex:triangle}) with $t$ replaced by $t'$) for all $k\geq 2$ and $t\geq 2$. Figures \ref{flat-triangle-fig1} and \ref{flat-triangle-fig2} show this for $k=2$.

Furthermore, assume that $p>k$. Then $\ell=2$ and
$M=\lfloor \frac{k-1}{2}\rfloor$, so that
$$
|S|=  2^{\lfloor \frac{k-1}{2}\rfloor} 
\left( \binom{t+1}{2}+1\right) -1
$$
as claimed in Theorem \ref{th:introP-adic}.

The main idea of our construction is illustrated in Figures \ref{flat-triangle-fig1} and \ref{flat-triangle-fig2}. We start with the triangle from Example \ref{ex:triangle} and flatten it so that any line passing through two distinct points of the triangle makes a low angle with the horizontal line. We then add a second copy of the flat triangle, translated by a small increment so that a low-slope line passing through a point of the triangle must also pass through its companion point. Finally, we add one more point that is not colinear with any two of the triangle points. To cover the entire set, we need to cover the triangle and add one more line for the extra point; however, if any point is removed from the set, one line can be dropped as shown in Figure \ref{flat-triangle-fig2}.

\begin{figure}[h!]
\begin{center}
\begin{tikzpicture}[scale=.5]

\filldraw[black] (0,2) circle (4pt);
\filldraw[black] (0.5,2) circle (4pt);
\filldraw[black] (3,1) circle (4pt);
\filldraw[black] (3.5,1) circle (4pt);
\filldraw[black] (6,0) circle (4pt);
\filldraw[black] (6.5,0) circle (4pt);
\filldraw[black] (9,1) circle (4pt);
\filldraw[black] (9.5,1) circle (4pt);
\filldraw[black] (12,0) circle (4pt);
\filldraw[black] (12.5,0) circle (4pt);
\filldraw[black] (18,0) circle (4pt);
\filldraw[black] (18.5,0) circle (4pt);
\filldraw[black] (10,10) circle (4pt);


\draw[black, thick] (0.5,2)--(3,1);
\draw[black, thick] (0.5,2)--(9,1);
\draw[black, thick] (3.5,1)--(6,0);
\draw[black, thick] (6.5,0)--(12,0);
\draw[black, thick] (9.5,1)--(18,0);
\draw[black, thick] (12.5,0)--(18,0);
\draw[black, thick] (6,10)--(14,10);

%
%

\end{tikzpicture}
\end{center}
\caption{The flattened triangle with two points at each vertex, plus an additional point that requires an extra line.}
\label{flat-triangle-fig1}
\end{figure}
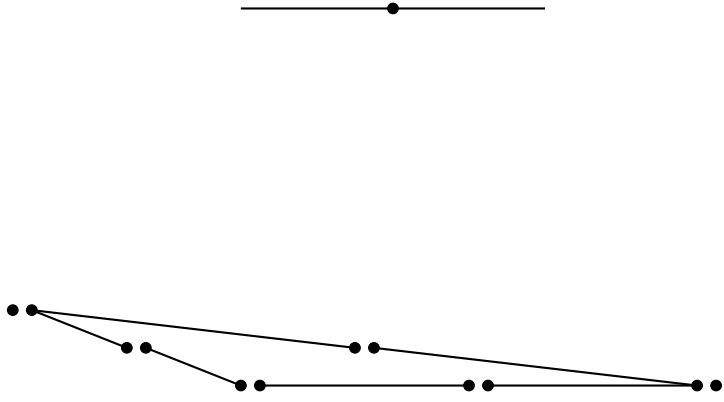

\begin{figure}[h!]
\begin{center}
\begin{tikzpicture}[scale=.5]

\filldraw[black] (0,2) circle (4pt);
\filldraw[black] (0.5,2) circle (4pt);
\filldraw[black] (3,1) circle (4pt);
\filldraw[black] (3.5,1) circle (4pt);
\filldraw[black] (6,0) circle (4pt);
\filldraw[black] (6.5,0) circle (4pt);
\draw[black, thick] (9,1) circle (4pt);
\filldraw[black] (9.5,1) circle (4pt);
\filldraw[black] (12,0) circle (4pt);
\filldraw[black] (12.5,0) circle (4pt);
\filldraw[black] (18,0) circle (4pt);
\filldraw[black] (18.5,0) circle (4pt);
\filldraw[black] (10,10) circle (4pt);

\draw[red, thick] (10,10)--(9.5,1);

\draw[black, thick] (0.5,2)--(3,1);
\draw[black, thick, dashed] (0.5,2)--(8.9,1);
\draw[black, thick] (3.5,1)--(6,0);
\draw[black, thick] (6.5,0)--(12,0);
\draw[black, thick, dashed] (9.5,1)--(18,0);
\draw[black, thick] (12.5,0)--(18,0);

%
%

\end{tikzpicture}
\end{center}
\caption{If we remove one of the points in the triangle, then its companion and the extra point above can be covered by one line, and then one of the lines covering the triangle is no longer needed.}
\label{flat-triangle-fig2}
\end{figure}
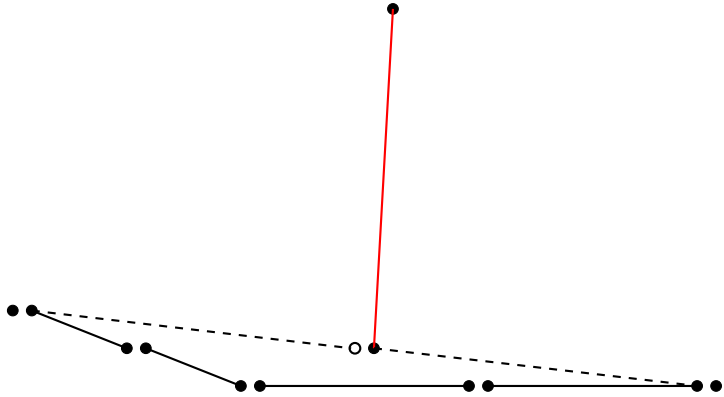

We now proceed with the rigorous proof. For the simple example described in Figures \ref{flat-triangle-fig1} and \ref{flat-triangle-fig2}, we recommend reading the proof below with $k=2$ and $M=\ell=1$. The general construction giving the bound in Theorem \ref{th:p-adicConstructionMain} is based on iterating the argument.

Let $t\in\N$ with $t\geq 2$. Let $p,r$ be primes such that
\begin{equation}\label{size-pr}
2\leq t < \frac{\sqrt{p}}{4}<r<\frac{\sqrt{p}}{2}.
\end{equation}
We note that the first two inequalities imply that $p>64$. The existence of a prime $r$ satisfying (\ref{size-pr}) is then guaranteed by Bertrand's Postulate.

\begin{lemma}\label{a-distances}
Let $k\geq 3$. For $m,m'\in \{1,\hdots,k\}$ such that $m\neq m'$, we have $|mp-m'p|_p\geq p^{-\ell+1}$.
\end{lemma}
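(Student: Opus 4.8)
The plan is to reduce the statement to a bound on the $p$-adic valuation of $m-m'$ and then read off the conclusion directly from the definition of $\ell$. First I would factor out the common $p$, writing $mp - m'p = (m-m')p$, so that the $p$-adic valuation splits as $v_p\big((m-m')p\big) = v_p(m-m') + 1$. Since $m,m'\in\{1,\dots,k\}$ are distinct, I would record the crucial elementary observation that $0 < |m-m'| \leq k-1 < k$; this is the only place the hypotheses on $m,m'$ enter.

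Next I would set $p^j \parallel (m-m')$, i.e. $j = v_p(m-m')$. Because $p^j$ divides the nonzero integer $m-m'$, we have $p^j \leq |m-m'| < k$, hence $j < \log_p k$. The key arithmetic step is to convert this strict inequality into the floor bound $j \leq \lfloor \log_p k \rfloor$. I expect this to be the only point requiring care: when $k$ is not a power of $p$ the implication is immediate from $j < \log_p k$ and $j$ integral, while in the boundary case that $k$ is an exact power of $p$ one has $j < \log_p k \in \mathbb{Z}$, forcing $j \leq \log_p k - 1 = \lfloor \log_p k\rfloor - 1$; in both cases $j \leq \lfloor \log_p k\rfloor$ holds, so the single inequality $j \leq \lfloor \log_p k\rfloor$ is valid uniformly.

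Finally I would substitute the definition $\ell = \lfloor \log_p k\rfloor + 2$ (valid since $k\geq 3$), which gives $\lfloor \log_p k\rfloor = \ell - 2$ and therefore $j \leq \ell - 2$. Combining with the valuation identity from the first step yields $v_p\big((m-m')p\big) = j+1 \leq \ell - 1$, so that
\[
|mp - m'p|_p = p^{-(j+1)} \geq p^{-(\ell-1)} = p^{-\ell+1},
\]
which is exactly the claimed bound. There is essentially no analytic obstacle here; the entire content is the passage $p^j < k \Rightarrow j \leq \lfloor \log_p k\rfloor$ together with the bookkeeping of the extra factor of $p$, and the main thing to get right is the floor estimate in the degenerate case where $k$ is a power of $p$.
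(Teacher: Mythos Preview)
Your proposal is correct and follows essentially the same route as the paper: bound $|m-m'|<k<p^{\ell-1}$ (the paper phrases this as $\log_p k<\ell-1$, hence $mp<p^\ell$), and conclude that $p^{\ell-1}$ is the largest power of $p$ that can divide $(m-m')p$. Your explicit factoring $mp-m'p=(m-m')p$ and the case analysis for the floor when $k$ is a power of $p$ are a bit more detailed than the paper's one-line version, but the argument is the same in substance.
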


\begin{proof}
For $1\leq m\leq k$, we have 
$\log_p m \leq \log_p k < \ell -1$,
so that $m<p^{\ell -1}$ and $mp<p^\ell$. Thus $p^{\ell-1}$ is the highest power of $p$ that may divide $mp-m'p$, as claimed.
\end{proof}

Let 
$$T=\{x=(x_1,x_2)\in \Zpk^2:\ 0\leq x_1+x_2\leq t+1, \ x_1\geq 1.\ ,x_2\ge 1\}.$$
For two sets $S,S'\subset\Zpk^2$, we write $S+S'=\{x+y:\ x\in S,y\in S'\}$.
If $S'=\{y\}$ is a singleton, we write $S+y=\{x+y:\ x\in S\}$.

\begin{lemma}\label{lem-multiscale-flat-triangle}
Consider the linear mapping $F:\Zpk^2\to \Zpk^2$ defined by
$$
F(z_1,z_2)=(z_1+rz_2, p^{k-1}z_2).
$$
\begin{itemize}
\item[(i)] 
We have $F(T)\subset \{x=(x_1,p^{k-1} x_2)\in\Zpk^2:\ x_1,x_2\in\{1,2,\dots,p-1\}\}$. Furthermore, if $z,w\in T$ are distinct, then $x=F(z)$ and $y=F(w)$ satisfy $x_1\neq y_1$. In particular, $F$ is injective on $T$.

\item[(ii)] Define $\Xi_0:=\{(0,0)\}$, and 
\begin{equation}\label{def-xi}
\Xi_m =\left\{ \left( \sum_{i=1}^m \nu_i p^{i\ell},0\right):\ \nu_i\in\{0,1\}\right\}
\end{equation}
for $1\leq m\leq M$. Then for any two distinct points $x,y\in F(T)+\Xi_m$, any line passing through both $x$ and $y$ has $p$-adic slope at most $p^{-k+m\ell}$. 

\item[(iii)] The set $F(T)$ can be covered by $t$ lines but not by $t-1$ lines. Moreover, for any $x\in T$, the set $T\setminus\{x\}$ can be covered by $t-1$ lines with $p$-adic slope at most $p^{-k+1}$.

\end{itemize}
\end{lemma}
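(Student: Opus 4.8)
The plan is to verify each of the three parts by direct computation with the explicit map $F$ and the sets $\Xi_m$, using the $p$-adic geometry from \Cref{sec:p-adic}. For part (i), I would first observe that for $z=(z_1,z_2)\in T$ we have $1\le z_1,z_2$ and $z_1+z_2\le t+1$, so both coordinates lie in a small range. Writing $F(z)=(z_1+rz_2,\,p^{k-1}z_2)$, the second coordinate is visibly of the form $p^{k-1}x_2$ with $x_2=z_2\in\{1,\dots,p-1\}$ (using $t+1<p$). For the first coordinate $x_1=z_1+rz_2$, I must check both that $x_1\in\{1,\dots,p-1\}$ and that $F$ is injective with distinct first coordinates. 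The key inequality is the size bound \eqref{size-pr}: since $r<\sqrt p/2$ and $z_1,z_2\le t+1<\sqrt p/4$, the integer $z_1+rz_2$ is genuinely less than $p$, so no wraparound occurs modulo $p^k$ and we may compare these values as honest integers. Injectivity on the first coordinate then reduces to showing $z_1+rz_2=w_1+rw_2$ forces $(z_1,z_2)=(w_1,w_2)$; here I would use that $r>t+1>|z_1-w_1|$ together with $r$ being larger than the possible gaps, so $r\mid(z_1-w_1)$ forces $z_1=w_1$ and hence $z_2=w_2$. This is where the precise choice of $r$ in \eqref{size-pr} is essential.

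For part (ii), the points of $F(T)+\Xi_m$ have second coordinate divisible by $p^{k-1}$, while their first coordinates differ by elements of the form (difference of $F(T)$ first coordinates) plus (difference of $\Xi_m$ shifts). Two distinct such points $x,y$ satisfy $|x_2-y_2|_p\le p^{-(k-1)}$ since both second coordinates are multiples of $p^{k-1}$; meanwhile the first coordinates differ by something whose $p$-adic valuation I must bound below. The shifts in $\Xi_m$ are sums $\sum_{i=1}^m\nu_i p^{i\ell}$, so any nonzero difference of two such shifts is divisible by exactly $p^{i\ell}$ for some $1\le i\le m$, giving $p$-adic size at most $p^{-\ell}$; combined with the first-coordinate contribution from $F(T)$ (which by part (i) is a unit difference, hence $p$-adic size $1$), the dominant term governs $|x_1-y_1|_p$. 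I would then apply \Cref{lemma-ratio} with the roles of the coordinates: since the second coordinate difference has valuation at least $k-1$ and the first has valuation at most $m\ell$, the line through $x,y$ makes angle at most $p^{-(k-1)+m\ell}$ with the horizontal, which after accounting for the convention gives slope at most $p^{-k+m\ell}$. Care with the off-by-one between valuation and the angle/slope definitions will be the main bookkeeping hazard here.

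Part (iii) is essentially a transported version of \Cref{ex:triangle}. Since $F$ is linear and injective on $T$ (by (i)), and lines map to lines under the invertible-on-the-relevant-scale structure, a covering of $T$ by $t$ lines pushes forward to a covering of $F(T)$ by $t$ lines, and conversely; the lower bound that $t-1$ lines do not suffice is inherited from the fact that $T$ is a copy of the triangle $T_t$ from \Cref{ex:triangle} (which needs $t$ lines). For the ``moreover'' statement, removing a point $x$ from $T$ lets us cover $T\setminus\{x\}$ by $t-1$ lines as in \Cref{ex:triangle}; I must additionally check that after applying $F$ these covering lines have $p$-adic slope at most $p^{-k+1}$, which follows because any two points of $F(T)$ lie in $F(T)+\Xi_0$ and part (ii) with $m=0$ (or the direct computation, since $\Xi_0=\{(0,0)\}$) forces slope at most $p^{-k}$; the covering lines of $T\setminus\{x\}$ from \Cref{ex:triangle} are exactly lines through pairs of such points, so their images have the claimed slope.

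I expect the main obstacle to be part (ii): correctly tracking the $p$-adic valuations of the coordinate differences through the combination of the $F$-image and the $\Xi_m$-shift, and matching them to the slope/angle conventions of \Cref{lemma-ratio} without an off-by-one error. The size conditions in \eqref{size-pr} and \Cref{a-distances} must be invoked precisely to ensure that the $\Xi_m$-shifts on distinct scales do not collide and that the claimed valuation bounds are sharp.
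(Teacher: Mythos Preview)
Your plans for parts (i) and (ii) are essentially the same as the paper's and will go through; the paper organizes (ii) as an explicit two-case split (either the underlying $F(T)$-points differ, giving $|x_1-y_1|_p=1$ and slope $\le p^{-k+1}$, or they coincide so $x_2=y_2$ and the $\Xi_m$-shift difference gives $|x_1-y_1|_p\ge p^{-m\ell}$), which is exactly the ``dominant term'' reasoning you sketch and will resolve the off-by-one you flagged.

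The gap is in part (iii), specifically the \emph{lower bound} that $F(T)$ cannot be covered by $t-1$ lines. Your plan asserts that a covering of $F(T)$ transfers back to a covering of $T$ because ``$F$ is linear and injective on $T$'' and lines map to lines ``and conversely.'' But $F$ is \emph{not} injective on $\Zpk^2$ --- its second coordinate is multiplication by $p^{k-1}$ --- so lines do not pull back along $F$ in any straightforward way, and injectivity on the finite set $T$ alone is not enough. The paper handles this by an explicit construction: given a line $L=L_{(1,p^{k-1}u)}(F(z))$ covering part of $F(T)$ (this form of the direction vector is justified using (ii)), it writes down a companion line $L'=L_{(1-ru,\,u)}(z)$ in $\Zpk^2$ and verifies by a direct coordinate computation that $F(w)\in L$ implies $w\in L'$ for $w\in T$. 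This converts a hypothetical $(t-1)$-line covering of $F(T)$ into a $(t-1)$-line covering of $T$, contradicting \Cref{ex:triangle}. Your plan is missing this construction entirely. (A smaller point in the forward direction: $F(L)$ need not be a line because $F(b)$ need not be a direction vector; the paper checks that $F(L)$ is nonetheless \emph{contained} in a line, which suffices.)
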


\begin{proof}
We start with (i). Let $x=F(z)$ for some $z\in T$. By (\ref{size-pr}), we have 
$$
1\leq z_1+rz_2< \frac{\sqrt{p}}{4}+  \frac{\sqrt{p}}{2} \cdot \frac{\sqrt{p}}{4}<p.
$$
Moreover, let $y=F(w)$ for some $w\in T$. If $y_1=x_1$, then $z_1+rz_2=w_1+rw_2$, so that $z_1-w_1=r(w_2-z_2)$ is divisible by $r$. Since $z_1,w_1\in\{1,\dots, t \}$ with $t<r$, we must have $z_1=w_1$. But then $rz_2=rw_2$ implies that $z=w$.

Next, we prove (ii). Let $x=x'+\xi$ and $y=y'+\eta$ with $x',y'\in F(T)$ and $\xi,\eta\in\Xi_m$. If $x'_1\neq y'_1$, then 
$$|x_1-y_1|_p = |x'_1-y'_1|_p = 1\hbox{ and } 
|x_2-y_2 |_p =  |x'_2-y'_2|_p\leq p^{-k+1}.
$$
By Lemma \ref{lemma-ratio}, any line through $x$ and $y$ has slope at most $p^{-k+1}$, proving (ii) in this case.
If on the other hand $x'_1=y'_1$, by (i) we must have $x'=y'$. Since $x\neq y$, we have $m\geq 1$ and $\xi\neq\eta$, so that
$$
|x_1-y_1|_p = |\xi_1-\eta_1|_p  \geq p^{-m\ell} \hbox{ and } 
x_2=y_2.
$$
By Lemma \ref{lemma-ratio} again, any line through $x$ and $y$ has slope at most $p^{-k+m\ell}$, as claimed.

We now prove (iii).  By Example \ref{ex:triangle}, $T$ can be covered by $t$ lines but not by $t-1$ lines, but for any $x\in T$, the set $T\setminus\{x\}$ can be covered by $t-1$ lines. 

Clearly, $F(T)$ can be covered by the $t$ lines $\{x:\ x_2=j\}$ for $j=1,2,\dots,t$. We now prove that for any $x\in F(T)$, the set $F(T)\setminus\{x\}$ can be covered by $t-1$ lines. Let $x=F(z)$ for $z\in T$, and let $L_1,\dots,L_{t-1}$ be lines covering $T\setminus\{x\}$. Then the sets $F(L_1),\dots,F(L_{t-1})$ cover $F(T)\setminus\{x\}$. It remains to show that if $L$ is a line, then $F(L)$ can be covered by a line. Let $L=\{a+sb:\ s\in\Zpk\}$. Then
$$
F(L)=\{F(a)+sF(b):\ s\in\Zpk\}.
$$
This is a line if $F(b)=(b_1+rb_2,p^{k-1}b_2)$ is a direction. Suppose therefore that this is not the case, so that  $b_1+rb_2=p^j u$ for some $u\in\Zpk^\times$ and $1\leq j\leq k$. If $j=k$, then $F(b)=(0,p^{k-1}b_2)$, and $F(L)\subset L_{(0,1)}(a)$. If $1\leq j\leq k-1$, then $v:=p^{-j}F(b)=(u,p^{k-1-j}b_2)$ is a direction, and $F(L)\subset L_{v}(a)$.

Let $L$ be one of the lines covering $F(T)\setminus\{x\}$. 
If $L$ contains two distinct points of $F(T)$, it follows from (ii) that $L$ has slope at most $p^{-k+1}$. If on the other hand 
$L\cap F(T)=\{y\}$ for some $y\in F(T)$, we may simply replace $L$ by $L_{(1,0)}(y)$. This proves the statement about slopes.

To complete the proof of (iii), we need to show that $F(T)$ cannot be covered by $t-1$ lines. Assume towards contradiction that such a covering exists, and let $L$ be one of the covering lines. We may assume that $L=L_b(x)$ for some $x=F(z)$, where $z\in T$. As shown above, we may further assume that $b=(1,p^{k-1}u)$ for some $u\in \{0,1,\dots,p-1\}$. Let
\begin{equation}\label{padic-eL'}
L'=L_{v}(z), \hbox{ where }v=(1-ru,u).
\end{equation}
It suffices to prove the following claim. Let $w\in T$, $w\neq z$, and let $y=F(w)$. If $y\in L$, then $w\in L'$. Indeed, if we can prove this, then $T$ is covered by the lines $L'$ corresponding via (\ref{padic-eL'}) to the lines $L$ covering $F(T)$. But by Example \ref{ex:triangle}, a covering of $T$ requires at least $t$ lines.

We now prove the claim. Assume that $y\in L$ as above, so that $y=x+sb=x+(s,p^{k-1}su)$ for some $s\in\Zpk$. Then 
$$
y_1-x_1=s,\ y_2-x_2=p^{k-1}su.
$$
But we also have $x=F(z)=(z_1+rz_2,p^{k-1}z_2)$ and $y=F(w)=(w_1+rw_2,p^{k-1}w_2)$, so that
$$
w_2-z_2=p^{-(k-1)} (y_2-x_2) = su,
$$
$$
w_1-z_1= (y_1-x_1) - r (w_2 - z_2) = s - sru.
$$
Hence $w-z=sv$, proving the claim. This completes the proof of the lemma.
\end{proof}

We are now ready to construct our example. 
Let $K_0:= F(T)$. 
For $m\in \{1,\hdots,M\}$, let 
$${\bf a}_m: =  (mp,p^{k-m\ell-1}),$$
\begin{equation}\label{K-inductive}
K'_m := K_{m-1}\cup (K_{m-1}+(p^{m\ell},0)), \ \ 
K_m: =K'_m\cup\{{\bf a}_m\}.
\end{equation}
Equivalently, we have
\begin{equation}\label{K-tower}
K_m=(K_0+\Xi_m)\cup\bigcup_{j=1}^m ({\bf a}_j+p^{j\ell}\, \Xi_{m-j}),
\end{equation}
where $\Xi_j$ were defined in (\ref{def-xi}). 
We note that
$$
|K_m|=2^m\binom{t+1}{2} + \sum_{j=1}^m 2^{m-j} = 2^m\binom{t+1}{2} + 2^m -1.
$$
We claim that the set $S:=K_M$ satisfies the conclusions of Theorem \ref{th:p-adicConstructionMain}. Indeed, Equation (\ref{S-size-padic}) follows from the above with $m=M$. It remains to prove that $K_M$ has the desired properties with regard to being covered by lines. We now prove this by induction in $m$.

\begin{lemma}\label{lem-iter-K-angle}
For distinct $x,y\in K'_m$ with $m=1,\hdots,M $, any line $L$ joining $u$ and $v$ has $p$-adic slope at most $p^{-k+m\ell}$.
\end{lemma}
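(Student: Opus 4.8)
The plan is to convert the claimed bound on the $p$-adic slope into a statement about valuations of coordinate differences, and then verify that statement by decomposing $K'_m$ according to the tower formula \eqref{K-tower}. Concretely, for distinct $x,y\in K'_m$ I would write $p^{a_1}\parallel x_1-y_1$ and $p^{a_2}\parallel x_2-y_2$ (with the convention $a_2=k$ when $x_2=y_2$). By \cref{lemma-ratio}, whenever $a_2>a_1$ the line joining $x$ and $y$ has slope at most $p^{-(a_2-a_1)}$, and when $x_2=y_2$ the line is horizontal; so it suffices to show $a_2\ge a_1+(k-m\ell)$ in every case. I will use throughout that $m\ell\le k-1$, since $m\le M=\lfloor (k-1)/\ell\rfloor$.

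Next I would classify the points. By \eqref{K-tower}, a point of $K'_m$ is either of \emph{type $0$}, lying in $K_0+\Xi_m=F(T)+\Xi_m$, or of \emph{type $j$} for some $1\le j\le m-1$, lying in ${\bf a}_j+p^{j\ell}\Xi_{m-j}$ (the point ${\bf a}_m$ is the only element of $K_m\setminus K'_m$, so the index only runs up to $m-1$). The second coordinate pins down the type: a type-$0$ point has $p^{k-1}\parallel x_2$ by \cref{lem-multiscale-flat-triangle}(i), while a type-$j$ point has $x_2=p^{k-j\ell-1}$, so the valuations $k-1,\,k-\ell-1,\dots$ are pairwise distinct. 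For the first coordinates, a type-$0$ point has $x_1=\alpha+\beta$ with $\alpha\in\{1,\dots,p-1\}$ a unit and $\beta$ a multiple of $p^{\ell}$, while a type-$j$ point has $x_1=jp+\delta$ with $\delta$ a multiple of $p^{(j+1)\ell}$.

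The verification then splits into four cases. (1) Both type $0$: this is exactly \cref{lem-multiscale-flat-triangle}(ii), giving slope at most $p^{-k+m\ell}$ directly. (2) Type $0$ against type $j$: reducing the first coordinates mod $p$ gives $x_1-y_1\equiv\alpha\not\equiv 0$, so $a_1=0$, while $a_2=k-j\ell-1\ge k-m\ell$. (3) Two points of the same type $j$: here $x_2=y_2$ and the line is horizontal. (4) Type $j$ against type $j'$ with $j<j'$: here $a_2=k-j'\ell-1$, and the first-coordinate difference is $(j-j')p+(\delta-\delta')$. I expect case (4) to be the only delicate step, and the main obstacle is bounding $a_1$: the higher-order terms $\delta,\delta'$ are multiples of $p^{(j+1)\ell}$ and so are harmless, but one must prevent $(j-j')p$ from being divisible by a large power of $p$. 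This is precisely where \cref{a-distances} enters: the choice $\ell=\lfloor\log_p k\rfloor+2$ forces $j,j'<p^{\ell-1}$, whence $v_p((j-j')p)\le \ell-1$, so $a_1\le\ell-1$ and therefore $a_2-a_1\ge k-(j'+1)\ell\ge k-m\ell$ since $j'\le m-1$. Everything else reduces to a mod-$p$ computation or a direct appeal to \cref{lem-multiscale-flat-triangle}, and I would present the argument as this direct classification rather than an induction on $m$, since the doubling map $K_{m-1}\mapsto K_{m-1}\cup(K_{m-1}+(p^{m\ell},0))$ produces cross-terms that are cleanest to handle through the explicit decomposition \eqref{K-tower}.
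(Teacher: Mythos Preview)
Your approach is essentially identical to the paper's: both decompose $K'_m$ via \eqref{K-tower} into the ``type $0$'' piece $K_0+\Xi_m$ and the ``type $j$'' pieces ${\bf a}_j+p^{j\ell}\Xi_{m-j}$, and then run through the same four cases, invoking \cref{lem-multiscale-flat-triangle}(ii) for case (1), a mod-$p$ computation plus \cref{lemma-ratio} for case (2), and \cref{a-distances} for case (4).

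One small correction is needed in case (3). The assertion ``when $x_2=y_2$ the line is horizontal'' is false in $\Zpk^2$: two distinct points with equal second coordinate can lie on many lines, and if $p^{a_1}\parallel x_1-y_1$ with $a_1>0$ then some of those lines have nonzero slope (e.g.\ $(0,0)$ and $(p,0)$ both lie on $L_{(1,p)}(0,0)$ when $k\ge 2$). What \cref{lemma-ratio} actually gives here, via the paper's convention $|0|_p=p^{-k}$, is slope at most $p^{-(k-a_1)}$, so you still need $a_1\le m\ell$. This holds because for two type-$j$ points the difference $x_1-y_1$ is a nonzero sum $\sum_{i=j+1}^{m}c_ip^{i\ell}$ with $c_i\in\{-1,0,1\}$, whose $p$-adic valuation is at most $m\ell$; the paper records exactly this bound as $|x_1-y_1|_p\ge p^{-m\ell}$. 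With that fix your argument matches the paper's line for line.
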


\begin{proof}
By (\ref{K-tower}), we have
$$
K'_m= 
(K_0+\Xi_m)\cup\bigcup_{j=1}^{m-1} ({\bf a}_j+p^{j\ell}\, \Xi_{m-j}).
$$
We consider the following cases.
\begin{itemize}
\item Suppose that either $m=1$, or else $m\geq 2$ and $x,y\in K_0+\Xi_m$. Then the conclusion follows from Lemma \ref{lem-multiscale-flat-triangle} (ii).

\item Let $x\in K_0+\Xi_m$ and $y\in {\bf a}_j+p^{j\ell} \,\Xi_{m-j}$ for some $1\leq j\leq m-1$.
Then $|x_1-y_1|_p=1$ and $|x_2-y_2|_p=p^{-k+j\ell+1}\leq p^{-k+m\ell}$, so that the claim follows from Lemma \ref{lemma-ratio}.

\item Let $x,y\in {\bf a}_j+p^{j\ell}\, \Xi_{m-j}$ for some $1\leq j\leq m-1$.
Then $|x_1-y_1|_p\geq p^{-(m\ell)}$ and $x_2=y_2$. By Lemma \ref{lemma-ratio},
$L$ has slope at most $p^{-k+m\ell}$.

\item Let $x\in {\bf a}_i+p^{i\ell}\,\Xi_{m-i}$ and $y\in {\bf a}_j+p^{j\ell}\, \Xi_{m-j}$ for some $1\leq i<j\leq m-1$. This can happen only when $m\geq 3$, so that $k\geq 3$. By Lemma \ref{a-distances}, we have $|x_1-y_1|_p\geq p^{-\ell+1}$. Since $|x_2-y_2|_p=p^{-k+j\ell +1}$, it follows by Lemma \ref{lemma-ratio} that
$L$ has slope at most $(p^{-k+j\ell+1})/(p^{-\ell+1})=p^{-k+(j+1)\ell}\leq p^{-k+m\ell}$.
\end{itemize}
\end{proof}

\begin{proposition}
For $m=0,1,\hdots,M$, the set $K_m$ cannot be covered by $t+m-1$ lines. However, for any $x\in K_m$, the set $K_m\setminus\{x\}$ can be covered by $t+m-1$ lines with $p$-adic slope at most $p^{-k+m\ell}$ if $m\geq 1$, and at most $p^{-k+1}$ if $m=0$.
\end{proposition}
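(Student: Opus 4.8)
The plan is to induct on $m$, with base case $m=0$ given directly by Lemma \ref{lem-multiscale-flat-triangle}(iii) (recall $K_0=F(T)$), and inductive step built on the decomposition $K_m=K'_m\cup\{\mathbf{a}_m\}$ with $K'_m=K_{m-1}\cup(K_{m-1}+(p^{m\ell},0))$ from \eqref{K-inductive}. The geometric engine is the companion phenomenon: because the displacement $(p^{m\ell},0)$ lies in a flat direction, any line of $p$-adic slope at most $p^{-k+m\ell}$ through a point $z$ also passes through $z+(p^{m\ell},0)$ — writing the direction as $b=(1,b_2)$ with $p^{k-m\ell}\mid b_2$, one solves $(p^{m\ell},0)=sb$ by $s=p^{m\ell}$, and then $p^{m\ell}b_2=0$ in $\Zpk$. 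Thus one flat line covering a point of one copy of $K_{m-1}$ automatically covers its companion in the other copy. Since $p^{-k+(m-1)\ell}\le p^{-k+m\ell}$, the flat lines furnished by the inductive hypothesis at level $m-1$ are already flat enough to trigger this at scale $m\ell$.

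For the coverability half I split on the location of the deleted point $x$. If $x=\mathbf{a}_m$, then $K_m\setminus\{x\}=K'_m$: I would cover one copy $K_{m-1}$ by $t+m-1$ flat lines (apply inductive coverability to $K_{m-1}\setminus\{y\}$, then add a horizontal line through $y$), and the companion phenomenon makes these same lines cover the second copy. If instead $x$ lies in a copy, say $x\in K_{m-1}$, I would cover $K_{m-1}\setminus\{x\}$ by $t+m-2$ flat lines from the inductive hypothesis; by the companion phenomenon these also cover $(K_{m-1}\setminus\{x\})+(p^{m\ell},0)$, so the only points of $K_m\setminus\{x\}$ left uncovered are the companion $x+(p^{m\ell},0)$ and the apex $\mathbf{a}_m=(mp,p^{k-m\ell-1})$. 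One further line through these two points finishes a cover by $t+m-1$ lines, and its slope is read off from the coordinate valuations of the two points via Lemma \ref{lemma-ratio}.

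For the non-coverability half, suppose toward a contradiction that $K_m$ is covered by $t+m-1$ lines, and let $L_0$ be a covering line through $\mathbf{a}_m$. The crucial claim is $|L_0\cap K'_m|\le 1$. Indeed, if $L_0$ met $K'_m$ in two points, Lemma \ref{lem-iter-K-angle} would force $L_0$ to have slope at most $p^{-k+m\ell}$; but a short valuation computation shows such a flat line through $\mathbf{a}_m$ misses $K'_m$ entirely, since every $w\in K'_m$ (whose height, from \eqref{K-tower}, is some $p^{k-j\ell-1}$ with $0\le j\le m-1$) satisfies $|w_2-(\mathbf{a}_m)_2|_p=p^{-(k-m\ell-1)}$, which for a line of slope $\le p^{-k+m\ell}$ forces the parameter $s$ to obey $|s|_p\ge p>1$, impossible in $\Zpk$. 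Granting the claim, the remaining $t+m-2$ lines cover $K'_m$ minus at most one point; since the two copies in $K'_m$ are disjoint by construction, deleting one point still leaves an entire translated copy of $K_{m-1}$, which by the inductive hypothesis needs at least $t+m-1$ lines — a contradiction.

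The main obstacle is the slope bookkeeping in the coverability half, concentrated in the single line through $\mathbf{a}_m$ and the leftover companion $x+(p^{m\ell},0)$. Its slope is governed by the competition between a unit first-coordinate difference (valuation $0$) and the second-coordinate valuation $k-m\ell-1$ coming from the height of $\mathbf{a}_m$, and a direct application of Lemma \ref{lemma-ratio} only yields flatness on scale $p^{-k+m\ell+1}$ rather than $p^{-k+m\ell}$. The delicate point is therefore to confirm that this (slightly weaker) flatness is nonetheless enough to keep the companion phenomenon alive at the next scale $(m+1)\ell$, which holds because $-k+m\ell+1\le -k+(m+1)\ell$ whenever $\ell\ge 1$, and to check the analogous estimate uniformly as $x$ ranges over the deeper layers $\mathbf{a}_j+p^{j\ell}\Xi_{m-j}$ of \eqref{K-tower}. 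By contrast, the non-coverability half reduces cleanly to the inductive hypothesis once the single-intersection property of $L_0$ is in hand, and carries no comparable subtlety.
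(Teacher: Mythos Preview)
Your argument follows the paper's closely: same induction, same case split on the location of $x$, same companion mechanism, and the same use of Lemma~\ref{lem-iter-K-angle} for the non-coverability half. The only structural difference is your treatment of $x=\mathbf{a}_m$: the paper writes down an explicit cover of $K'_m$ by horizontal lines at each of the $t+m-1$ distinct heights appearing in \eqref{K-tower}, rather than invoking the inductive hypothesis plus one extra horizontal line as you do. Both routes work and give the same slope bound in that case.

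Your flagged ``main obstacle'' is a genuine point that the paper simply does not address. The paper introduces the line $L_{t+m-1}$ through $\mathbf{a}_m$ and the leftover companion and then never returns to its slope; as you compute via Lemma~\ref{lemma-ratio}, that slope is $p^{-k+m\ell+1}$, which strictly exceeds $p^{-k+m\ell}$. So the slope clause of the proposition, read literally, is not established by either proof. Your remedy is also the correct one: the only downstream use of the slope bound is to trigger the companion phenomenon at the next step, which needs only slope at most $p^{-k+(m+1)\ell}$, and $\ell\ge 1$ provides exactly the slack required. A clean fix is to replace the bound $p^{-k+m\ell}$ by $p^{-k+m\ell+1}$ in the statement for $m\ge 1$; the inductive step then goes through unchanged, since $p^{-k+(m-1)\ell+1}\le p^{-k+m\ell}$ again uses only $\ell\ge 1$.
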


\begin{proof}
We proceed by induction in $m$. For the base case $m=0$, the conclusion follows from Lemma~\ref{lem-multiscale-flat-triangle} (iii). Assume now that $m\geq 1$, and that the proposition has been proved with $m$ replaced by $m-1$. We will prove it for $m$.

We first prove that $K_{m}$ cannot be covered by $t+m-1$ lines. Assume towards contradiction that $K_{m}$ is covered by the lines $L_1,\dots,L_{t+m-1}$, and that $\mathbf{a}_m\in L_{1}$. Suppose first that $L_{1}$ contains two distinct points $x,y\in K'_m$. By Lemma \ref{lem-iter-K-angle}, $L_1$ must have $p$-adic slope at most $p^{-k+m\ell}$. Since we also have $K_{m-1}\subset \Zpk\times p^{k-m\ell}\Zpk$, it follows that $L_{1}\subset\Zpk\times p^{k-m\ell}\Zpk$. In particular, $L_{1}$ cannot contain ${\bf a}_m=(pm,p^{k-m\ell -1})$, contradicting our assumption.

Therefore $L_1$ contains at most one point of $K'_m$.
It follows that at least one of the sets $K_{m-1}$ and $(K_{m-1}+(p^{m\ell},0))$ is covered by the remaining lines $L_2,\dots,L_{t+m-1}$. This contradicts the inductive hypothesis for $m-1$. Hence $K_{m}$ cannot be covered by $t+m-1$ lines.

We now prove that $K_{m}\setminus\{x\}$ can be covered by $t+m-1$ lines for any $x\in K_{m}$. If $x={\bf a}_m$, then $K_{m}\setminus \{x\}=K'_m$ can be covered by the $t+m-1$ lines $L_{(1,0)}((0,jp^{k-1}))$ for $j=1,\dots,t$ and $L_{(1,0)}((0,p^{k-j\ell+1}))$ for $j=1,\dots,m-1$. Assume now that $x\in K'_m$. Then $x$ is one of the points $y$, $y+(p^{m\ell},0)$ for some $y\in K_{m-1}$. Let $L_{t+m-1}$ be a line through ${\bf a}_m$ and the remaining one of these two points. 
Now, without loss of generality we assume $x=y\in K_{m-1}$. We are left with the set
$$
(K_{m-1}\setminus\{x\}) \cup ((K_{m-1}\setminus\{x\})+(p^{m\ell},0)).
$$
By the inductive hypothesis, $K_{m-1}\setminus\{x\}$ can be covered by $t+m-2$ lines $L_1,\dots,L_{t+m-2}$, all with slopes at most $p^{-k+m\ell}$. 
We claim that whenever one of these lines passes through a point $z\in K_{m-1}$, it also passes through $z'=z+(p^{m\ell},0)$. Indeed, we may write the line as $L_b(z)$ with $b=(1,p^{k-m\ell}u)$ for some $u\in\Zpk$. Then $z-z'=(p^{m\ell},0)=p^{m\ell}b$, and $z'\in L_b(z)$ as claimed. 
Hence, $L_1,\hdots,L_{t+m-2}$ cover both $K_{m-1}$ and $K_{m-1}+(p^{m\ell},0))$. 
This concludes the proof of the proposition.
\end{proof}

\subsection{Upper bound for $p$-adic lines}\label{sec:p-adicBound}

We say that a set $S$ of points in $\Zpk^n$ is {\em nearly covered} by $t$ lines if each proper subset of $S$ is contained in the union of some set of $t$ lines, but no set of $t$ lines contains $S$ itself.

\begin{theorem}
    If $S \subseteq \mathcal{R}^2$ is nearly covered by $t$ lines, then $|S| \leq t \left(1 + k^{-1}t\right)^k+1$ if $t \geq k$, and $|S| \leq t2^t+1$ if $k \geq t$.
\end{theorem}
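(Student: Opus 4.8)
The plan is to derive both inequalities from a single recursive bound of the shape $|S|-1 \le t\prod_{\ell=1}^{k}(1+a_\ell)$, where $a_\ell\ge 0$ records the number of covering lines that genuinely resolve structure at scale $\ell$, with $\sum_{\ell} a_\ell \le t$. Granting this, the two regimes are a short optimization. By AM--GM the product $\prod(1+a_\ell)$ is always at most $\bigl(1+\tfrac{1}{k}\sum_\ell a_\ell\bigr)^{k}\le (1+t/k)^k$, which yields the first bound; and when scales are plentiful, $k\ge t$, the integrality of the $a_\ell$ (a single line contributes to one scale and cannot be split) instead caps the product at $2^t$, attained by spreading the budget as $a_\ell\in\{0,1\}$ over $t$ distinct scales. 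This is exactly the stated dichotomy, and the doubling-per-active-scale phenomenon mirrors the lower-bound construction of \Cref{th:p-adicConstructionMain}. The base case $k=1$ is the field case, where \Cref{th:introHyperplaneCover} (equivalently \Cref{ex:triangle}) already bounds a nearly covered set by $\binom{t+2}{2}$.

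To produce the recursion I would peel one scale at a time using the cube geometry of \Cref{sec:p-adic}, being careful not to split $\Zpk^2$ into all $p^2$ scale-one cubes at once, which would introduce a forbidden factor of $p$. The key structural input is the rigidity of $p$-adic intersections: by \Cref{LLangle,Q-properties}, two lines meeting in more than one point are near-parallel and coincide on an entire cube, and after rescaling a scale-one cube $Q$ by $\iota_Q$ to $(\Z/p^{k-1}\Z)^2$, each covering line meeting $Q$ restricts, by \Cref{Q-properties}, to a single line over $(\Z/p^{k-1}\Z)^2$. Thus the restricted configuration $\iota_Q(S\cap Q)$ is once more covered on each deletion by at most $t$ lines, now over one fewer scale, and the number $a_\ell$ of covering lines that actually separate points at the current scale is what controls the branching. \Cref{lemma-ratio} is the tool that certifies that a line resolving a large horizontal separation against a small vertical one can only have low slope, which forces ``companion'' points created at a finer scale to share a covering line, exactly the effect exploited in \Cref{th:p-adicConstructionMain}.

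The crux, and the reason the algebraic argument of \Cref{prop1} does not transfer, is descending the extremal hypothesis: near-coveredness of $S$ does not by itself make $\iota_Q(S\cap Q)$ nearly covered in the smaller instance, because a covering line's budget is shared between its points inside and outside $Q$, and restriction forgets this coupling. I would control this with a pigeonhole-plus-dichotomy argument centered on a heavy line $L^*$ with $|S\cap L^*|\ge(|S|-1)/t$, which exists because some line of any deletion-covering carries a $1/t$ fraction of $S$. If $L^*$ appears in the deletion-covering $\mathcal{L}_z$ of every point $z\notin L^*$, then $\mathcal{L}_z\setminus\{L^*\}$ covers $(S\setminus L^*)\setminus\{z\}$ while $S\setminus L^*$ is not coverable by $t-1$ lines (else $S$ would be coverable by $t$); hence $S\setminus L^*$ is genuinely nearly covered by $t-1$ lines and the budget induction applies. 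Otherwise some $\mathcal{L}_z$ meets $L^*$ only in proper cosets of subgroups, and since a line making angle $p^{-\ell}$ with $L^*$ meets it in a coset of size exactly $p^{\ell}$ with $\ell\le k-1$ by \Cref{Q-properties}, the scale profile of these cosets bounds $|S\cap L^*|$ by a one-scale-smaller instance. Balancing the two cases, and checking that peeling a scale never multiplies the count by more than $1+a_\ell$, is where the real work lies; the rigidity furnished by \Cref{LLangle,Q-properties} is precisely what keeps the final bound free of any dependence on $p$.
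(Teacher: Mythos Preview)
Your optimization step is correct and matches the paper: once you have $|S|-1\le t\prod_{\ell}(1+a_\ell)$ with nonnegative integers $a_\ell$ summing to at most $t$, AM--GM gives $(1+t/k)^k$ and integrality gives $2^t$ exactly as you say. The reduction to a single heavy line is also right in spirit; the paper does the analogous step at the end, bounding $|S|-1$ by $t$ times the maximum of $|S\cap L|$ over lines $L$ in one fixed deletion-covering.

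The gap is your dichotomy for bounding $|S\cap L^*|$. Case~1 (every $\mathcal{L}_z$ with $z\notin L^*$ contains $L^*$) tells you only about $S\setminus L^*$: you can induct on $t$ there, but you have no $p$-free handle on $|S\cap L^*|$ itself. Case~2 (some $\mathcal{L}_z$ omits $L^*$) gives a single static covering of $S\cap L^*$ by cosets along $L^*$, but not the \emph{per-point} property you need to recurse: for each $P\in S\cap L^*$ you want a covering of $(S\cap L^*)\setminus\{P\}$ that avoids $P$, and a covering indexed by a fixed $z\notin L^*$ may well pass through $P$. So neither branch feeds the scale recursion on $|S\cap L^*|$, and ``balancing the two cases'' has nothing to balance.

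The paper's fix is to see that the right recursive quantity is not ``nearly covered'' but the weaker property that \emph{does} descend. Set $f(\ell,t)$ to be the largest $|T|$ over sets $T\subseteq L\cap Q$ (a line intersected with a $p^{-\ell}$-cube) such that every $P\in T$ admits a covering of $T\setminus\{P\}$ by at most $t$ lines, all avoiding $P$. For $T=S\cap L^*$ at $\ell=0$ this is automatic: the covering $\mathcal{L}_P$ of $S\setminus\{P\}$ must miss $P$, or it would cover $S$. Now if $T$ meets $j{+}1$ of the $p^{-(\ell+1)}$-sub-cubes of $Q$, then for $P$ in any one sub-cube $Q'$ the lines of $\mathcal{L}_P$ meeting $L$ outside $Q'$ number at least $j$ (one per other occupied sub-cube, since by \Cref{Q-properties} each line meets $L$ inside a single sub-cube), so at most $t-j$ lines of $\mathcal{L}_P$ meet $L$ inside $Q'$, and these cover $(T\cap Q')\setminus\{P\}$ while still avoiding $P$. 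Hence $|T\cap Q'|\le f(\ell+1,t-j)$ and $f(\ell,t)\le(j+1)\,f(\ell+1,t-j)$, which unwinds to your product. The point is that fixing a line from the outset makes the descent clean; attempting to descend on the two-dimensional set $S$ via rescaled cubes, as in your second paragraph, runs straight into the budget-sharing obstruction you yourself flagged.
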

\begin{proof}
    For $0 \leq \ell \leq k$, denote by $f(\ell,t)$ the largest number of points in any set $T$ such that
    \begin{enumerate}
        \item $T$ is contained in the intersection of a line $L$ and a $p^{-\ell}$-cube $Q$, and
        \item for each point $P \in T$, there is a set $\mathcal{L}_P$ of at most $t$ lines such that $T\setminus \{P\} \subset \bigcup_{L' \in \mathcal{L}_P}L'$ and $P \notin \bigcup_{L' \in \mathcal{L}_P}L'$.
    \end{enumerate}
    Clearly, $f(k,t)=1$, since each cube on scale $k$ contains a single point.
    We claim that,
    \begin{equation}\label{eq:qAdicUBRecursion}
    f(\ell,t) \leq \max_{0 \leq j \leq t} (j+1) f(\ell+1, t-j)\end{equation}
    for $0 \leq \ell \leq k-1$.

    Indeed, let $T$, $L$, and $Q$ be as above.
    Suppose that $T$ has nonempty intersection with exactly $j+1$ distinct $p^{-\ell-1}$-cubes $Q_1,\dots,Q_{j+1}$ contained in $Q$.
    Let $Q'=Q_{j_0}$ be one of them, let $P \in T \cap Q'$, and denote $\mathcal{L}=\mathcal{L}_P$.

    Let $L'\in\mathcal{L}$.
    By \cref{Q-properties} part (ii), there is a cube $\tilde{Q}$ on some scale $\tilde{\ell}$ such that $L\cap L'=L\cap \tilde{Q}=L'\cap\tilde{Q}$. We have $L'\cap L\cap Q\neq\emptyset$; on the other hand, $P\in L\cap Q$ and $P\not\in L'\cap Q$, so that $L'\cap L\cap Q'\subsetneq L\cap Q$. It follows that $\tilde{\ell}\geq\ell+1$, and that $\tilde{Q}$ is contained in one of the cubes $Q_j$.

We can thus partition $\mathcal{L}$ into $\mathcal{L}_1 = \{L' \in \mathcal{L}: L' \cap L \subseteq Q'\}$ and $\mathcal{L}_2 = \{L' \in \mathcal{L}: L \cap L' \cap Q' = \emptyset\}$.
    Furthermore, if $L' \in \mathcal{L}_2$, then $L' \cap L \subseteq Q_j$ for some $j\neq j_0$. Since for each $j\neq j_0$ there must be at least one such line, 
    we have that $|\mathcal{L}_2| \geq j$, and so $|\mathcal{L}_1| \leq t-j$.
    Since the choice of $P$ was arbitrary, this implies that $|T \cap Q'| \leq f(\ell + 1, t-j)$, and \cref{eq:qAdicUBRecursion} follows directly.

    From \cref{eq:qAdicUBRecursion}, we see that $f(0,t) \leq \prod_{0 \leq i \leq k-1} (j_i+1)$ for some set of integers $j_i$ such that each $j_i \geq 0$ and $\sum j_i = t$.
    Maximizing this function, we see that $f(0,t) \leq (1+k^{-1}t)^k$ if $t \geq k$, and $f(0,t) \leq 2^t$ if $k \geq t$.
    Since each proper subset of $S$ is contained in the union of $t$ sets that satisfy the above hypotheses for $T$ with $\ell = 0$, the conclusion of the theorem follows immediately.
\end{proof}

\section{Acknowledgments}

Izabella {\L}aba was supported by NSERC Discovery Grant 22R80520. Ben Lund was supported by the Institute for Basic Science (IBS-R029-C1).

Part of the research by Hailong Dao and Ben Lund was carried out at Vietnam Institute for Advanced Study in Mathematics (VIASM), and we thank VIASM for their hospitatilty.
Part of the research by all authors was conducted at the IBS-DIMAG workshop on combinatorics and geometric measure theory, and we thank the Institute for Basic Science for their hospitality.

Ben Lund thanks Boris Bukh, Alexander Clifton, Rutger Campbell, and Peter Nelson for helpful conversations.

\bibliographystyle{plain}
\bibliography{nearlyCovered}

\end{document}